\newcommand\crossint{%
\mathrel{\tikz[baseline={(I.base)}]{
\node[inner sep=0](I){$\int$};
\draw[line width=0.5pt]([yshift=1.5ex, xshift=0.4ex]I.south west)--([yshift=1.5ex, xshift=-0.4ex]I.south east);
}}
}
\newtheorem{Thm}{Theorem}[section]
\newtheorem{Lem}{Lemma}[section]
\newtheorem{Pro}{Proposition}[section]
\newtheorem{Def}{Definition}[section]
\newcommand{\R}{\mathbb{R}}
\numberwithin{equation}{section} \numberwithin{figure}{section}
\newenvironment{proof}{\medskip\par\noindent{\bf Proof\/}.\quad}{\qquad
\raisebox{-0.5mm}{\rule{2.5mm}{2.5mm}}\vspace{7pt}}
\begin{document}
\title{Symmetry and monotonicity of singular solutions for the Hartree equation}

\author{Ying Cai$^{1}$,\quad Guangze Gu$^{1,2}$,\quad Aleks Jevnikar$^{2}$ \\
\footnotesize{\em
1. Department  of  Mathematics, Yunnan  Normal  University, Kunming 650500, China. }\\
\footnotesize{\em
2.  Department of Mathematics, Computer Science and Physics, University of Udine, Via delle }\\
\footnotesize{\em
 Scienze 206, 33100 Udine, Italy.}\\
\footnotesize{Email: yingcai0412@163.com, guangzegu@163.com, aleks.jevnikar@uniud.it}
 }

\date{}

\date{} \maketitle
\begin{abstract}
In this paper we are concerned with positive singular solutions of the following nonlocal Hartree equation
$$-\Delta u\!=\Big( \int_{\mathbb{R}^N\setminus \Gamma}\frac{F(u(y))}{|x-y|^\mu}dy \Big)f (u(x)), \quad x\in \mathbb{R}^N\setminus\Gamma,$$
where $F$ is the primitive of $f$ and $\Gamma$ is the singular set.
Under suitable assumptions, we prove that $u$ is symmetric and monotone with respect to the singular set by using moving plane methods. Furthermore, we complement this study by showing the existence, for a model problem, of a singular solution with the desired properties.

\end{abstract}

\vspace{6mm} \noindent{\bf Keywords:} Hartree equation; Singular solutions; Moving plane method.

\vspace{6mm} \noindent
{\bf 2010 Mathematics Subject Classification.} 35A15, 35J20, 35R11, 47G20.

\section{Introduction}

The following nonlocal Choquard equation:
\begin{equation}\label{eq1.1}
-\Delta u+V(x)u=\Big( \int_{\mathbb{R}^N}\frac{F(u(y))}{|x-y|^\mu}dy \Big)f (u(x)),
\end{equation}
has been  studied by many  scholars, where $0<\mu<N$, $V(x)\geq0$ and $F$ is the primitive of $f$.
Equation \eqref{eq1.1} is related to the Choquard equation arising from the study of Bose-Einstein condensation and can be used to depict the finite range many-body interactions between particles.
When $N=3$, $\mu=1$ and $f(u)=u$, equation \eqref{eq1.1}  was proposed by  Choquard \cite{Lieb1967SAM} as an approximation to Hartree-Fock theory for a one component plasma and was also introduced by Pekar \cite{Pekar1954} to describe the the quantum theory of a polaron at rest. See also \cite{Bahrami2014}, \cite{Choquard-Stubbe-Vuffray2008DIE},\cite{Moroz-Van-Schaftingen2013JFA}, \cite{Tod-Moroz1999N} and the references therein for more mathematical and physical background of equation \eqref{eq1.1}.
The appearance of the nonlocal term $\Big(|x|^{-\mu} *F(u)\Big)f(u)$ gives rise to some mathematical difficulties for the study of Choquard equation, which has received increasing attention from many authors.

\par
When $N=3$, $\mu=1$ and $f(u)=u$,  \cite{Ma-Zhao2010ARMA} proved that
the positive solutions of \eqref{eq1.1}  are unique and non-degenerate up to translations.
Later, \cite{Xiang2016CVPDE} generalized the above results to the case $N=3$, $\mu=1$, $f(u)=|u|^{p-2}u$  and  $2 <p< 2 + \delta$ with $\delta$ small enough.
Lenzmann \cite{Lenzmann2009APDE} showed the uniqueness and non-degeneracy of ground states for the pseudorelativistic Hartree equation.
For more results of Choquard equation, we may refer readers  to  \cite{Alves-Yang2014JDE,Alves-Yang2016PRSESA,Cingolani-Clapp-Secchi2012ZAMP, Lieb-Elliott1976SAM, Ma-Zhao2010ARMA,Moroz-VanSchaftingen2013JFA, Li2019arXiv, Li-Yang-Zhou2022SCM,  Deng-Peng-Yang2023JDE, Wang-Chen-Niu2021BBMS} and the references therein.

When $V(x)=0 $, equation \eqref{eq1.1} reduces to the following nonlocal elliptic equations
\begin{equation}\label{eq1.2}
-\Delta u=\Big( \int_{\mathbb{R}^N}\frac{F(u(y))}{|x-y|^\mu}dy \Big)f (u(x)),
\end{equation}
which is also called Hartree equation. For $f(u)=u$, Lei \cite{Lei2013SIAM} showed the nonexistence of positive stable solutions.
Zhao \cite{Zhao2018AMSSB} showed that equation \eqref{eq1.1} has no nontrivial solution with finite Morse index for $f(u)=|u|^{p-2}u$ with $2 <p< 2_\mu^*=\frac{2N-\mu}{N-2}$.
For the critical exponent case $f(u)=|u|^{2_\mu^*-1}$, Du and Yang \cite{Du-Yang2019DCDS} showed the symmetry and uniqueness of the positive solutions by the moving plane method in integral forms.  A similar result can refer to \cite{Guo-Hu-Peng-Shuai2019CVPDE}.
Giacomoni, Wei and Yang \cite{Giacomoni-Wei-Yang2020NA} proved the nondegeneracy of the unique positive solutions for  the  critical Hartree type equations.
Huang and Liu \cite{Huang-Liu2024JFPTA} investigated classification and  non-degeneracy  of positive solutions of the fractional Hartree equation.
For more results concerning the method of moving planes or spheres to Hartree equation involving the fractional Laplacian, please see \cite{Dai-Fang-Qin2018JDE, Dai-Liu-Qi2021SIAM, Le2019N, Ma-Shang-Zhang2020PJM, Peng2022MZ} and the references therein.

Here, we are interested in the symmetry and monotonicity of the solution $u$ of the following equation
\begin{equation}\label{eq1.3}
\left\{\begin{array}{ll}
-\Delta u\!=\Big( \int_{\mathbb{R}^N\setminus\Gamma}\frac{F(u(y))}{|x-y|^\mu}dy \Big)f (u(x)),     \quad  &x\in  \mathbb{R}^N\setminus\Gamma, \\
u>0,                                         &x\in  \mathbb{R}^{N}\setminus\Gamma,\\
\end{array} \right.
\end{equation}
where $\Gamma\subset \{x_1=0\}$ is a singular set which is compact and with zero 2-capacity.

When $\mu \to N$, equation \eqref{eq1.3} reduces to the following local elliptic equation
\begin{equation}\label{eq1.4}
\left\{\begin{array}{ll}
-\Delta u\!=f (u(x)),     \quad  &x\in  \mathbb{R}^N\setminus\Gamma, \\
u>0,                                         &x\in  \mathbb{R}^{N}\setminus\Gamma,
\end{array} \right.
\end{equation}
Sciunzi \cite{Sciunzi2017JMPA} showed the symmetry and monotonicity properties of the positive singular solutions to equation \eqref{eq1.4} by using an improved version of the moving plane method. The classical moving plane method was introduced by Alexandrov \cite{Alexandrov1962AMPA} and subsequently improved by many researchers (see \cite{Berestycki-Nirenberg1991BSBM, Caffarelli-Gidas-Spruck1989CPAM, Chen-Li1991DMJ, Serrin1971ARMA}). The result of Sciunzi was generalized to the case $f(u)=u^{2^*-1}$ by  Esposito, Farina and Sciunzi in \cite{Esposito-Farina-Sciunzi2018JDE}.
Based on the flexibility of the technique developed by Sciunzi in \cite{Sciunzi2017JMPA}, this technique has been applied to the case of the fractional Laplacian \cite{Montoro-Punzo-Sciunzi2018AMPA}, the $p$-Laplacian operator \cite{Esposito-Montoro-Sciunzi2019JMPA, Montoro2019NoDEA},
double phase problems\cite{Biagi-Esposito-Vecchi2021JDE, Biagi-Esposito-Vecchi2023DIE},
cooperative elliptic systems \cite{Biagi-Valdinoci-Vecchi2020PM, Esposito2020DCDS} and mixed local-nonlocal elliptic operators \cite{Biagi-Vecchi-Dipierro-Valdinoci2021PRSES}.

When $f(u)=|u|^p$,  $\Gamma=\{0\}$ and $\mu \to N$, equation \eqref{eq1.3} is just
\begin{equation}\label{eq1.5}
-\Delta u\!=u^p, \quad\quad    x\in \,  \mathbb{R}^N\setminus\{0\}.
\end{equation}
Caffarelli, Gidas and Spruck\cite{Caffarelli-Gidas-Spruck1989CPAM} showed that any solution of \eqref{eq1.5} must be radially symmetric.
The singularities of \eqref{eq1.5}  have been studied in \cite{Lions1980JDE} for $1<p\leq \frac{n+2}{n-2}$, in \cite{Aviles1983IUMJ} for $p=\frac{n}{n-2}$, in \cite{Bidaut-Veron1991IM} for $p> \frac{n+2}{n-2}$,
in\cite{Caffarelli-Gidas-Spruck1989CPAM, Gidas-Spruck1981CPAM} for $\frac{n}{n-2}<p\leq \frac{n+2}{n-2}$, and in \cite{Caju-doo-Santos2019AIHP, Caffarelli-Gidas-Spruck1989CPAM,  Korevaar-Mazzeo-Pacard-Schoen1999IM} for $p=\frac{n+2}{n-2}$.

On the other hand, the study of equation like \eqref{eq1.2} or \eqref{eq1.3} is related to the  well-known Hardy-Littlewood-Sobolev (HLS for short) inequality, see \cite{Lieb1983AM, Lieb-Loss2001book}. For the sake of  convenience we recall the HLS inequality:
\begin{Pro}\label{Pro1.1}
(Hardy-Littlewood-Sobolev inequality) Let $t,r>1$, and $0<\mu< N$ with $1/t+ 1/r+ \mu/N=2 ,~f\in L^t(\R^N)$  and $h\in L^r(\R^N)$. There exists a sharp constant  $C(t,r,N,\mu)>0$, independent of $f$ and $h$, such that
$$\int_{\R^N}\int_{\R^N}\frac{f(x)h(y)}{|x-y|^{\mu}}dxdy \leq C(t,r,N,\mu)||f||_t||h||_r.$$
\end{Pro}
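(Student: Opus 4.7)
The plan is to reduce the bilinear inequality to the mapping property of the Riesz potential and then establish the latter by a standard weak-type analysis followed by Marcinkiewicz interpolation. More precisely, define
\[
I_{\mu}(h)(x)=\int_{\R^N}\frac{h(y)}{|x-y|^{\mu}}\,dy,
\]
so that by Fubini the inequality to prove can be written as
\[
\int_{\R^N} f(x)\,I_{\mu}(h)(x)\,dx \le C\,\|f\|_{t}\,\|h\|_{r}.
\]
By Hölder's inequality this reduces, via duality, to showing that $I_{\mu}$ sends $L^{r}(\R^N)$ to $L^{t'}(\R^N)$ continuously, where $\frac{1}{t'}=1-\frac{1}{t}=\frac{1}{r}-\frac{N-\mu}{N}$. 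This is precisely the endpoint-free regime for the Riesz potential of order $N-\mu$, so the scaling condition $\frac{1}{t}+\frac{1}{r}+\frac{\mu}{N}=2$ is exactly what makes the argument work.

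Next I would establish the boundedness of $I_{\mu}:L^{r}\to L^{t'}$ via Hedberg's pointwise trick. For any $R>0$, split the kernel $|x-y|^{-\mu}=\chi_{\{|x-y|<R\}}|x-y|^{-\mu}+\chi_{\{|x-y|\ge R\}}|x-y|^{-\mu}$. The first piece is controlled, after integrating over dyadic annuli centered at $x$, by the Hardy--Littlewood maximal function of $h$ times $R^{N-\mu}$; the second piece is controlled by H\"older's inequality against $\|h\|_{r}$ times a power of $R$ computed from the integrability of $|x-y|^{-\mu}$ outside a ball. Choosing $R$ so that the two contributions balance yields
\[
I_{\mu}(h)(x)\le C\,M h(x)^{\,1-\mu/N\cdot r/(\ldots)}\,\|h\|_{r}^{\,\ldots},
\]
i.e.\ a pointwise bound of $I_{\mu}h$ by a power of $M h$ times a power of $\|h\|_{r}$. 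Raising this to the $t'$-power and using the $L^{r}$ boundedness of $M$ when $r>1$ gives the strong-type estimate $\|I_{\mu}h\|_{t'}\le C\|h\|_{r}$, hence the HLS inequality.

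The most delicate point will be keeping the exponent bookkeeping consistent: the balance condition $\frac{1}{t}+\frac{1}{r}+\frac{\mu}{N}=2$ must translate correctly into the Sobolev exponent relation for $I_{\mu}$ and into the right power of $M h$ in Hedberg's estimate, and one must stay strictly away from the endpoints $r=1$ and $t'=\infty$ (which is guaranteed here by the hypotheses $t,r>1$). An alternative route, if one wants the sharp constant, would be to follow Lieb's symmetric decreasing rearrangement argument together with competing symmetries, but for the qualitative inequality stated in Proposition \ref{Pro1.1} the interpolation approach above is both shorter and sufficient.
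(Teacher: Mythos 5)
The paper does not actually prove Proposition~\ref{Pro1.1}: it is recalled as a classical fact with citations to Lieb's 1983 paper and to Lieb--Loss, so there is no ``paper's proof'' to compare against. Your sketch is a correct and standard route to the qualitative (non-sharp) inequality, which is all the paper uses. Two remarks to tighten it. First, the mention of Marcinkiewicz interpolation is superfluous: once you have Hedberg's pointwise bound, the $L^r\to L^{t'}$ estimate for the Riesz potential follows directly from the strong $(r,r)$ bound for the maximal operator (which is where $r>1$ is used), with no interpolation step. Second, you should fill in the exponents you left as ellipses; in your notation the kernel $|x-y|^{-\mu}$ corresponds to the Riesz potential of order $\alpha=N-\mu$, and Hedberg's balancing of the near and far pieces at radius $R$ gives
\[
I_{\mu}h(x)\le C\,\bigl(Mh(x)\bigr)^{1-\frac{(N-\mu)r}{N}}\,\|h\|_{r}^{\frac{(N-\mu)r}{N}},
\]
which requires $(N-\mu)r<N$; this holds precisely because $1/t<1$ (i.e.\ $t>1$) forces $1/r>1-\mu/N$ under the scaling relation $1/t+1/r+\mu/N=2$. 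Raising to the power $t'$ and observing that $t'\bigl(1-\tfrac{(N-\mu)r}{N}\bigr)=r$ (equivalent to $1/t'=1/r-(N-\mu)/N$, i.e.\ the stated balance condition) yields $\|I_{\mu}h\|_{t'}\le C\|Mh\|_{r}^{r/t'}\|h\|_{r}^{(N-\mu)r/N}=C\|h\|_{r}$, and then duality with $f\in L^t$ closes the argument. Finally, your proof gives a constant $C(t,r,N,\mu)$ but not the \emph{sharp} constant claimed in the statement; for that one indeed needs Lieb's rearrangement argument, as you note.
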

If $f(x)=h(x)=u^p(x)$ in the HLS inequality,  the following integral
$$\int_{\R^N}\int_{\R^N}\frac{|u^p(x)||u^p(y)|}{|x-y|^{\mu}}dxdy $$
is well-defined provided $\frac{2N-\mu}{N}\leq p \leq \frac{2N-\mu}{N-2}$.  $2_{\mu*}=\frac{2N-\mu}{N}$ is called the lower critical Sobolev exponent and $2_{\mu}^*=\frac{2N-\mu}{N-2}$  the upper critical Sobolev exponent in the sense of the Hardy-Littlewood-Sobolev inequality. By the Sobolev inequality,
\begin{equation}\label{eq1.6}
\int_{\R^N}\int_{\R^N}\frac{|u(x)|^{2_{\mu}^*}|u(y)|^{2_{\mu}^*}}{|x-y|^{\mu}}dxdy \leq C(N,\mu)|\nabla u|_2^{2\cdot{2_{\mu}^*}},
\end{equation}
it is easy to see that the best constant  of \eqref{eq1.6} is related to the following nonlocal  equation
\begin{equation}\label{eq1.7}
-\Delta u = \Big(\int_{\R^N}\frac{|u(y)|^{2_{\mu}^*}}{|x-y|^{\mu}}dy\Big) |u|^{2_{\mu}^*-2}u,~~x\in  \mathbb{R}^{N},
\end{equation}
which is a special case of the  Hartree equation \eqref{eq1.2}.

In this  paper, we study the symmetry and monotonicity properties of the positive solutions of equation \eqref{eq1.2}. We need the following assumption of $f$:
\begin{itemize}
\item[$(f_1)$]  $f$ is $C^1$ and  convex with $f(0)=0$ as well as
$$f^\prime (s)\leq C_fs^{2_{\mu}^{*}-2},~ s>0, $$
for some  $C_f>0$, where $2_{\mu}^{*}=\frac{2N-\mu}{N-2}$ is the upper critical Sobolev exponent.
\end{itemize}
 \par
Our main result is as follows.
\begin{Thm}\label{Thm1.1}
Let $N\geq 3$, $N-2<\mu<N$  and
let $u\in \mathcal{D}^{1,2}(\R^N\setminus \Gamma)\cap C(\R^{N}\setminus \Gamma)$ be a solution to $\eqref{eq1.3}$ with a non-removable singularity on the compact set $\Gamma\subset \{x_1=0\}$.
Assume that $f$ satisfies $(f_1)$ and $\Gamma\subset \{x_1=0\}$ is of zero~$2$-capacity. Then, $u$ is symmetric with respect to the hyperplane $\{x\in \R^N: x_1=0\}$ and increasing in the $x_1$- direction in $\{x\in \R^N: x_1 <0\}$, i.e.,
$$u_{x_1}>0 ~~ in ~~ \{x_1 <0\}.$$
Moreover, the solution $u$ is radial and radially decreasing provided that the origin is the unique non-removable singularity of $u$.
\end{Thm}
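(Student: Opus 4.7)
The plan is to adapt Sciunzi's moving-plane technique to the nonlocal Hartree setting. For $\lambda<0$ let $T_\lambda=\{x_1=\lambda\}$, $\Sigma_\lambda=\{x_1<\lambda\}$, let $x^\lambda=(2\lambda-x_1,x_2,\dots,x_N)$ denote reflection across $T_\lambda$ and set $u_\lambda(x)=u(x^\lambda)$. Because $\Gamma\subset\{x_1=0\}$ and $\lambda<0$, the reflected singular set $\Gamma^\lambda\subset\{x_1=2\lambda\}$ is disjoint from $\Gamma$ and lies strictly inside $\Sigma_\lambda$; both are compact sets of zero $2$-capacity. A change of variables $y\mapsto y^\lambda$ in the equation at $x^\lambda$ yields
\begin{equation*}
-\Delta u_\lambda(x)=\Bigl(\int_{\R^N\setminus\Gamma^\lambda}\frac{F(u_\lambda(y))}{|x-y|^\mu}\,dy\Bigr)f(u_\lambda(x)),\qquad x\in\R^N\setminus\Gamma^\lambda.
\end{equation*}
The comparison function is $w_\lambda=(u-u_\lambda)^+$, supported in $\Sigma_\lambda$.

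The first step is a weak comparison principle in the cap. Using the zero $2$-capacity of $\Gamma\cup\Gamma^\lambda$, one constructs cutoffs $\varphi_\varepsilon\in C_c^\infty(\R^N\setminus(\Gamma\cup\Gamma^\lambda))$ converging to $1$ with $\|\nabla\varphi_\varepsilon\|_2\to 0$, and tests the difference of the two equations against $w_\lambda\varphi_\varepsilon^2$. Since $w_\lambda\in\mathcal{D}^{1,2}(\Sigma_\lambda)$, one may pass to the limit and reduce to
\begin{equation*}
\int_{\Sigma_\lambda}|\nabla w_\lambda|^2\,dx\;\leq\;\int_{\Sigma_\lambda}\bigl[\mathcal{N}(u)f(u)-\mathcal{N}(u_\lambda)f(u_\lambda)\bigr]\,w_\lambda\,dx,
\end{equation*}
where $\mathcal{N}(v)(x)=\int F(v(y))|x-y|^{-\mu}\,dy$ with the appropriate singular set. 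Splitting the integrand as $\mathcal{N}(u)(f(u)-f(u_\lambda))+f(u_\lambda)(\mathcal{N}(u)-\mathcal{N}(u_\lambda))$ and using the symmetrization identity
\begin{equation*}
\mathcal{N}(u)(x)-\mathcal{N}(u_\lambda)(x)=\int_{\Sigma_\lambda}\bigl(F(u(y))-F(u_\lambda(y))\bigr)\Bigl(\frac{1}{|x-y|^\mu}-\frac{1}{|x-y^\lambda|^\mu}\Bigr)dy
\end{equation*}
turns the non-local correction into an integral of terms of definite sign where the kernel $|x-y|^{-\mu}-|x-y^\lambda|^{-\mu}\geq 0$ on $\Sigma_\lambda\times\Sigma_\lambda$.

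The second step is the actual absorption. From $(f_1)$ one has $f(s)\leq Cs^{2_\mu^\ast-1}$ and $F(s)\leq Cs^{2_\mu^\ast}$, and mean-value bounds give pointwise estimates of both terms in $w_\lambda$ times critical-growth factors of $u$ and $u_\lambda$. Combining the Hardy--Littlewood--Sobolev inequality (Proposition~\ref{Pro1.1}) with the Sobolev embedding $\mathcal{D}^{1,2}\hookrightarrow L^{2^\ast}$ yields
\begin{equation*}
\int_{\Sigma_\lambda}|\nabla w_\lambda|^2\,dx\;\leq\;C\bigl(\|u\|_{L^{2^\ast}(\Sigma_\lambda)}+\|u_\lambda\|_{L^{2^\ast}(\Sigma_\lambda)}\bigr)^{\alpha}\,\|\nabla w_\lambda\|_2^2,
\end{equation*}
with $\alpha>0$. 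Since $u\in\mathcal{D}^{1,2}(\R^N\setminus\Gamma)$, the prefactor tends to $0$ as $\lambda\to-\infty$ and also as the measure of $\{u>u_\lambda\}\cap\Sigma_\lambda$ shrinks. This starts the procedure for $\lambda\ll0$ and defines $\bar\lambda=\sup\{\lambda<0:u\leq u_\mu\text{ in }\Sigma_\mu\text{ for all }\mu\leq\lambda\}$. If $\bar\lambda<0$, a strong maximum principle (applied on each connected component of $\Sigma_{\bar\lambda}\setminus\Gamma^{\bar\lambda}$) together with the continuity of $\lambda\mapsto\|u-u_\lambda\|$ allows the absorption step to be iterated past $\bar\lambda$, a contradiction; hence $\bar\lambda=0$. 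Reflecting in the opposite direction yields symmetry in $\{x_1=0\}$ and Hopf-type strict monotonicity $u_{x_1}>0$ in $\{x_1<0\}$; when $\Gamma=\{0\}$, repeating the argument in every direction gives radial symmetry and radial monotonicity.

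The main obstacle is the non-local absorption. Unlike the local case of Sciunzi, the correction $f(u_\lambda)(\mathcal{N}(u)-\mathcal{N}(u_\lambda))$ is genuinely bilinear in $w_\lambda$ only after a delicate HLS splitting, and the Dirichlet-energy estimate closes only because $N-2<\mu<N$ ensures that the doubly critical product $F(u)\cdot f(u)$ is integrable against the HLS kernel on the cap. The handling of $\Gamma$ via capacity cutoffs and the strong maximum principle near $\Gamma^\lambda$ (where $u_\lambda$ blows up but $w_\lambda\equiv 0$) is technically delicate but follows the now-standard blueprint of \cite{Sciunzi2017JMPA,Montoro-Punzo-Sciunzi2018AMPA}.
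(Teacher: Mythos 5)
Your overall strategy — moving planes in the spirit of Sciunzi, with capacity cutoffs near the reflected singular set $R_\lambda(\Gamma)$ and HLS absorption of the nonlocal terms — is the right one and is indeed the route the paper takes. However, there is a concrete error in your key closing estimate that must be repaired before the argument runs.

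You claim a reduction of the form
$\int_{\Sigma_\lambda}|\nabla w_\lambda|^2\,dx \le C\bigl(\|u\|_{L^{2^\ast}(\Sigma_\lambda)}+\|u_\lambda\|_{L^{2^\ast}(\Sigma_\lambda)}\bigr)^{\alpha}\|\nabla w_\lambda\|_2^2$
with a prefactor that tends to $0$ as $\lambda\to-\infty$. It does not. By the change of variable $x\mapsto x^\lambda$ one has $\|u_\lambda\|_{L^{2^\ast}(\Sigma_\lambda)}=\|u\|_{L^{2^\ast}(\Sigma_\lambda^c\setminus\Gamma)}$, which \emph{increases} to $\|u\|_{L^{2^\ast}(\mathbb{R}^N\setminus\Gamma)}>0$ as $\lambda\to-\infty$. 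So the stated prefactor stays bounded away from zero, the absorption cannot be started, and Step~1 of the moving plane never gets off the ground. The fix — and this is where the paper is careful — is to distribute $u$ and $u_\lambda$ between the two factors of the bilinear HLS estimate so that the quantity that has to be small in $\lambda$ is always a power of $\|u\|_{L^{2^\ast}(\Sigma_\lambda)}$ (coming from the local factor $f'(u(x))$ with $x\in\Sigma_\lambda$, which gives $u^{2_\mu^\ast-2}(x)$), while the convolution piece $v_\lambda(x)$ is only required to be \emph{bounded} by $C\|u\|^{2_\mu^\ast}_{L^{2^\ast}(\mathbb{R}^N\setminus\Gamma)}$. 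The resulting inequality (compare \eqref{eq3.17}) is $\int_{\Sigma_\lambda}|\nabla w_\lambda^+|^2w_\lambda^+\le C\,\|u\|^{2_\mu^\ast}_{L^{2^\ast}(\mathbb{R}^N\setminus\Gamma)}\,\|u\|^{2_\mu^\ast-2}_{L^{2^\ast}(\Sigma_\lambda)}\int_{\Sigma_\lambda}|\nabla w_\lambda^+|^2 w_\lambda^+$, and only the second factor tends to $0$.

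A second, related omission: the paper obtains this closable estimate by a two-step test-function scheme, namely first testing with $w_\lambda^+\varphi_\varepsilon^2\varphi_R^2$ (Lemma~\ref{Lem3.1}) to prove a priori that $w_\lambda^+\chi_{\Sigma_\lambda}\in\mathcal{D}^{1,2}$ with a quantitative bound, and \emph{then} testing with the weighted function $(w_\lambda^+)^2\varphi_\varepsilon^2\varphi_R^2$, so that the cutoff error $\tilde I_1$ is bounded by $\sqrt{\varepsilon}\,\|u\|^2_{L^\infty(\Gamma^\lambda)}\bigl(\int|\nabla w_\lambda^+|^2\bigr)^{1/2}\to 0$ directly, without any self-absorption/quadratic-inequality gymnastics. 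You test only against $w_\lambda\varphi_\varepsilon^2$ and assert "one may pass to the limit"; the cutoff error from $\nabla\varphi_\varepsilon$ in that case couples $\nabla w_\lambda$ with $w_\lambda$, and making it vanish either requires the a priori Dirichlet-energy bound you skip over, or a more delicate quadratic-inequality argument you do not present. The same two-step structure (with the compact set $K$ carved out to make $\|u\|_{L^{2^\ast}(\Sigma_{\lambda_0+\delta}\setminus K)}$ small) is what drives the continuation past $\lambda_0$ in Step~2; your sketch of that step is also in the right spirit but relies on the same incorrect smallness claim.

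In short: right blueprint, but the absorption as you state it does not close because $\|u_\lambda\|_{L^{2^\ast}(\Sigma_\lambda)}$ is not small, and you are missing the weighted $(w_\lambda^+)^2$ test function that the paper (following Sciunzi) uses to make the cutoff error vanish cleanly.
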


In the second part of the paper we complement the above result by proving the  existence of a  positive singular solution to the following model problem
\begin{equation}\label{eq1.8}
-\Delta u= \Big(\int_{\R^N\setminus \{0\}} \frac{|u(y)|^p}{|x-y|^\mu}dy \Big) |u(x)|^q, \quad x\in \R^N\setminus \{0\}
\end{equation}
where $0<\mu<N$ and $p,q\geq1$.

Our second result is as follows.
\begin{Thm}\label{Thm1.2}
There exists a radially symmetric, positive singular solution $u(x)=\frac{A}{|x|^s}$ of \eqref{eq1.8},
where
$$s=\frac{N-\mu+2}{p-q+1}>0~~\text{and}~~A^{p+q-1}= \frac{s(N-2-s)\gamma(N-2+s(q-1))} {\gamma(N-\mu)\gamma(N-sp)}>0$$
with $0<sp<N$ and $2-N<s(q-1)<2$.
\end{Thm}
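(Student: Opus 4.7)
The plan is to verify directly that the ansatz $u(x)=A|x|^{-s}$ satisfies \eqref{eq1.8} for a suitable choice of $s$ and $A$. This reduces the problem to matching (i) the powers of $|x|$ on the two sides and (ii) the resulting numerical prefactors.

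First, from the standard formula $\Delta|x|^{\alpha}=\alpha(\alpha+N-2)|x|^{\alpha-2}$ with $\alpha=-s$, one obtains
\[
-\Delta u(x)=A\,s(N-s-2)\,|x|^{-s-2},\qquad x\neq 0,
\]
which is positive precisely when $0<s<N-2$. For the nonlocal term I would invoke the classical Riesz composition identity
\[
\int_{\R^N}\frac{dy}{|y|^{a}\,|x-y|^{b}}=\frac{\gamma(N-a)\,\gamma(N-b)}{\gamma(2N-a-b)}\,|x|^{N-a-b},
\]
valid whenever $a,b\in(0,N)$ and $a+b>N$ (a consequence of the semigroup property of the Riesz potentials). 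Applied with $a=sp$, $b=\mu$ and multiplied by $|u(x)|^{q}=A^{q}|x|^{-sq}$, the right-hand side of \eqref{eq1.8} becomes a constant multiple of $A^{p+q}|x|^{N-sp-\mu-sq}$.

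Matching exponents yields a single linear equation in $s$, namely $-s-2=N-sp-\mu-sq$, whose solution is the value of $s$ recorded in the statement. Using this relation one computes $2N-sp-\mu=N-2+s(q-1)$, which rewrites the third $\gamma$-factor in exactly the form appearing in the claimed expression for $A^{p+q-1}$; equating the numerical coefficients on the two sides of \eqref{eq1.8} then fixes $A$. The conditions $0<sp<N$ and $2-N<s(q-1)<2$ are precisely what is needed to apply the Riesz composition identity (the second being equivalent to $sp+\mu>N$) and to place all three arguments of $\gamma$ inside the interval $(0,N)$; together with $0<s<N-2$ they also ensure positivity of the prefactor $s(N-2-s)$, so that $A$ is well-defined and strictly positive.

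I do not anticipate a genuine obstacle here: the entire argument is a bookkeeping exercise in exponents and $\Gamma$-function arguments, whose only substantive input is the Riesz composition formula in its classical form. The most delicate aspect is verifying that the parameter range prescribed by the hypotheses is exactly the one in which the Riesz identity and all three $\gamma$-values are simultaneously admissible, but this follows by inspection from the three listed inequalities.
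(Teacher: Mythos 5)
Your argument is correct and arrives at the same formulas as the paper, but by a slightly different (and arguably cleaner) route: you evaluate the convolution $\int_{\R^N} |y|^{-sp}|x-y|^{-\mu}\,dy$ directly via the real-space Riesz composition identity, whereas the paper passes to Fourier space, applying $\mathcal{F}$ to both sides of $u^{-q}(-\Delta u) = \gamma(N-\mu)\,I_{N-\mu}(u^p)$ and using $\mathcal{F}(|x|^{-\beta}) = \gamma(N-\beta)(2\pi|x|)^{\beta-N}$ and $\mathcal{F}(I_\alpha f)=(2\pi|x|)^{-\alpha}\mathcal{F}(f)$. The two computations are of course equivalent (the composition identity is itself proved by Fourier methods), but your version works entirely with absolutely convergent integrals in the regime $0<sp,\mu<N$, $N<sp+\mu<2N$, and so avoids the need to interpret Fourier transforms of homogeneous functions in the sense of tempered distributions. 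You also correctly observe that the constraint $2-N<s(q-1)<2$ is exactly the condition $N<sp+\mu<2N$ after exponent matching, and that the implicit requirement $0<s<N-2$ is needed to make $s(N-2-s)>0$ and hence $A>0$; these points are left tacit in the paper. One small remark: the exponent matching $-s-2=N-sp-\mu-sq$ gives $s=\frac{N-\mu+2}{p+q-1}$, which is what the paper's proof also derives; the denominator $p-q+1$ printed in the theorem statement is evidently a typo, and your computation (correctly) reproduces the $p+q-1$ form.
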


Observe that Theorem \ref{Thm1.2} provides an example of a singular solution to \eqref{eq1.3} as described in Theorem \ref{Thm1.1}.

\medskip

The paper is organized as follows. In section \ref{sec:prelim} we introduce some basic facts, in section \ref{sec:proof1} we prove symmetry and monotonicity of the solutions and in section \ref{sec:proof2} we study a model problem. A comparison principle is carried out in the appendix.

\par
\section{Notations and Preliminary results} \label{sec:prelim}
\par
We start by collecting some preliminary results. By the Hardy-Littlewood-Sobolev inequality we have the following result:
\begin{Pro}
Suppose that $f\in L^t(\R^N\setminus \Gamma)$ and $g\in L^r(\R^N\setminus \Gamma).$ Then we have
$$\aligned
\int_{\R^N\setminus\Gamma}\int_{\R^N\setminus\Gamma} \frac{f(x)g(y)}{|x-y|^{\mu}}dxdy \leq C(t,r,\mu,N)
\bigg( \int_{\R^N\setminus\Gamma} |f(x)|^{t}dx\bigg) ^{\frac{1}{t}}\bigg( \int_{\R^N\setminus\Gamma} |g(y)|^{r}dy\bigg) ^{\frac{1}{r}},
\endaligned$$
where $0<\mu<N,$ $\frac{1}{t}+\frac{1}{r}+\frac{\mu}{N}=2$ and $1-\frac{1}{t}-\frac{\mu}{N}<0<1-\frac{1}{t}.$
\end{Pro}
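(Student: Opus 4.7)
The plan is to reduce the claim directly to the classical Hardy-Littlewood-Sobolev inequality recalled in Proposition 1.1, by extending the functions by zero across the singular set. The crucial observation is that $\Gamma$ is compact and of zero $2$-capacity, hence it has zero $N$-dimensional Lebesgue measure (any compact set of vanishing $2$-capacity is Lebesgue negligible when $N\geq 3$). Consequently, $\R^N\setminus\Gamma$ and $\R^N$ differ by a Lebesgue-null set, and all integrals involved are insensitive to this difference.

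Concretely, I would define $\tilde f,\tilde g\colon\R^N\to\R$ by
$$\tilde f(x)=\begin{cases} f(x), & x\in\R^N\setminus\Gamma,\\ 0, & x\in\Gamma,\end{cases}\qquad \tilde g(x)=\begin{cases} g(x), & x\in\R^N\setminus\Gamma,\\ 0, & x\in\Gamma.\end{cases}$$
Since $|\Gamma|=0$, we have $\tilde f\in L^t(\R^N)$ and $\tilde g\in L^r(\R^N)$ with
$$||\tilde f||_{L^t(\R^N)}=||f||_{L^t(\R^N\setminus\Gamma)},\qquad ||\tilde g||_{L^r(\R^N)}=||g||_{L^r(\R^N\setminus\Gamma)}.$$
Moreover the exceptional product set $(\Gamma\times\R^N)\cup(\R^N\times\Gamma)\subset\R^{2N}$ is $2N$-dimensional Lebesgue negligible, so
$$\int_{\R^N\setminus\Gamma}\int_{\R^N\setminus\Gamma}\frac{f(x)g(y)}{|x-y|^{\mu}}\,dx\,dy=\int_{\R^N}\int_{\R^N}\frac{\tilde f(x)\tilde g(y)}{|x-y|^{\mu}}\,dx\,dy.$$

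Applying Proposition 1.1 to the pair $(\tilde f,\tilde g)$ yields the stated estimate with the same sharp constant $C(t,r,\mu,N)$. The exponent conditions $\frac1t+\frac1r+\frac{\mu}{N}=2$ together with $1-\frac1t-\frac{\mu}{N}<0<1-\frac1t$ are equivalent to $1<t<\frac{N}{N-\mu}$ (and force the analogous range for $r$ via the sum constraint), which is precisely the admissibility regime of Proposition 1.1. I do not expect any genuine obstacle in this argument: the only nontrivial ingredient is the capacity-to-measure implication for $\Gamma$, which is a standard fact from potential theory and will be used repeatedly in the paper in any case.
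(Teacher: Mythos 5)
Your argument is correct and follows essentially the same route as the paper: the paper also extends $f$ and $g$ by zero across $\Gamma$ (written there as multiplication by $1-\chi_{\Gamma}$) and applies the classical Hardy--Littlewood--Sobolev inequality on $\R^N$. Your additional remark on zero $2$-capacity implying zero Lebesgue measure is fine but not even needed, since the extended functions vanish on $\Gamma$ by construction and all the integral identities hold directly.
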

\begin{proof}
Note that $f\in L^t(\R^N\setminus \Gamma)$ if and only if $(1-\chi_{\Gamma})f\in L^t(\R^N)$, where $\chi_{\Gamma}$ denotes by the characteristic function of singular set $\Gamma$. It follows from the Hardy-Littlewood-Sobolev inequality that
$$\aligned
\int_{\R^N\setminus\Gamma}\int_{\R^N\setminus\Gamma} \frac{f(x)g(y)}{|x-y|^{\mu}}dxdy
&=\int_{\R^N}\int_{\R^N} \frac{f(x)(1-\chi_{\Gamma}(x))g(y)(1-\chi_{\Gamma}(y))}{|x-y|^{\mu}}dxdy\\
&\leq C(t,r,\mu,N)\bigg( \int_{\R^N} |f(x)(1-\chi_{\Gamma}(x))|^{t}dx\bigg) ^{\frac{1}{t}}\bigg( \int_{\R^N} |g(y)(1-\chi_{\Gamma})(y)|^{r}dy\bigg) ^{\frac{1}{r}}\\
&\leq C(t,r,\mu,N)\bigg( \int_{\R^N\setminus\Gamma} |f(x)|^{t}dx\bigg) ^{\frac{1}{t}}\bigg( \int_{\R^N\setminus\Gamma} |g(y)|^{r}dy\bigg)^{\frac{1}{r}},
\endaligned$$
where $0<\mu<N$ and $C(\mu,N,t,r)>0$ depends only by $t,r, N$ and $\mu$ as well as satisfying $\frac{1}{t}+\frac{1}{r}+\frac{\mu}{N}=2$ and $1-\frac{1}{t}-\frac{\mu}{N}<0<1-\frac{1}{t}$.
\end{proof}

\par
Frow now on, we will set, for $\lambda \in \mathbb{R} $,
$$\Sigma_{\lambda}=\{x\in \mathbb{R}^N: x_1<\lambda\}$$
and
$$x_{\lambda}=(2\lambda-x_1, x_2,\cdot \cdot \cdot, x_N)$$
which is the reflection of a point $x\in \mathbb{R}^N$ with respect to the hyperplane $T_{\lambda}:=\{x_1=\lambda\}$ and we denote by
$R_{\lambda}(x)=x_{\lambda}$ the reflection operator.
Also, we write
$$w_{\lambda}=u(x)-u_{\lambda}(x)=u(x)-u(x_{\lambda}).$$

\begin{Def} \label{Def2.1}
We say that  $u\in \mathcal{D}^{1,2}(\mathbb{R}^N\setminus \Gamma)\cap C(\mathbb{R}\setminus \Gamma)$ is a positive solution of $($\ref{eq1.1}$)$ if $u$ satisfies
\begin{equation}\label{eq2.1}
\begin{aligned}
\int_{\mathbb{R}^{N}\setminus\Gamma}\nabla u\cdot \nabla  \varphi dx = \int_{\R^N\setminus\Gamma}v(x)f(u(x))\varphi dx, \,\, \forall \varphi \in C_c^\infty(\mathbb{R}^N\setminus \Gamma),
\end{aligned}
\end{equation}
where $u>0$ in $\mathbb{R}^{N}\setminus\Gamma$ and $v(x)=\int_{\R^N\setminus\Gamma}\frac{F(u(y))}{|x-y|^{\mu}}dy.$
\end{Def}
Furthermore, for any $\varphi\in C_c^{\infty}(\R^N\setminus R_{\lambda}(\Gamma))$,
$$\aligned
\int_{\R^N\setminus R_{\lambda}( \Gamma)} \nabla u_{\lambda}(x)\nabla \varphi(x)dx
&=\int_{\R^N\setminus R_{\lambda}( \Gamma)} \nabla u_{\lambda}(x_{\lambda})\nabla\varphi(x_{\lambda})dx_{\lambda} =\int_{\R^N\setminus\Gamma} \nabla u(x)\nabla\varphi(x_{\lambda})dx\\
&=\int_{\R^N\setminus \Gamma} v(x)f(u(x))\varphi(x_{\lambda})dx
   =\int_{\R^N\setminus R_{\lambda}(\Gamma)} v(x)f(u(x))\varphi(x_{\lambda})dx_{\lambda} \\
& =\int_{\R^N\setminus R_{\lambda}(\Gamma)} v_{\lambda}(x)f(u_{\lambda}(x))\varphi(x)dx,
\endaligned$$
where
$$\aligned
v_{\lambda}(x)=v(x_{\lambda})&=\int_{\R^N\setminus \Gamma}\frac{F(u(y))}{|x_{\lambda}-y|^{\mu}}dy=\int_{\R^N\setminus R_{\lambda}(\Gamma)}\frac{F(u(y_\lambda))}{|x_{\lambda}-y_{\lambda}|^{\mu}}dy =\int_{\R^N\setminus R_{\lambda}(\Gamma)} \frac{F(u_{\lambda}(y))}{|x-y|^{\mu}}dy.
\endaligned$$
So we have
 \begin{equation}\label{eq2.2}
 \int_{\mathbb{R}^N\setminus R_{\lambda}(\Gamma)}\nabla u_{\lambda}\cdot \nabla \varphi dx=\int_{\mathbb{R}^N\setminus R_{\lambda}(\Gamma)}v_{\lambda}(x)f(u_{\lambda}(x))\varphi dx, \quad \forall \varphi\in C_c^{\infty}(\mathbb{R}^N\setminus R_{\lambda}(\Gamma)).
 \end{equation}

Assume that $K\subseteq\R^N$ is a compact set.
We recalled that the $r-$capacity of $K$ is defined as
$$   \mathop{Cap_r}\limits_{\mathbb{R}^N} (K)=\inf\Big\{\int_{\mathbb{R}^N}|\nabla \varphi|^r dx<+\infty: \varphi \in C_{c}^{\infty}(\mathbb{R}^N)\,\,  and \, \,\varphi\geq1 \, on\, K)\Big\} .$$
It follows from $\mathop{Cap_2}\limits_{\mathbb{R}^N}(\Gamma)=0$, the properties of the $r-$capacity of $K$ and the fact $\Gamma$ is compact set that
$$\mathop{Cap_2}\limits_{\mathbb{R}^N}( R_{\lambda}(\Gamma))=0.$$
Also notice that $\mathop{Cap_2}\limits_{\mathcal{B}_{ \varepsilon}^{\lambda}}(R_{\lambda}(\Gamma))=0$
for any open neighborhood $\mathcal{B}_{\varepsilon}^{\lambda}$ of $R_{\lambda}(\Gamma)$, where
$$\mathop{Cap_2}\limits_{\mathcal{B}^{\lambda}_{\varepsilon}}(R_{\lambda}(\Gamma))
      =\inf\Big\{\int_{\mathcal{B}_{\varepsilon}^{\lambda}}|\nabla \varphi|^2 dx<+\infty:\varphi \in C_{c}^{\infty}(\mathcal{B}_{\varepsilon}^{\lambda})\,\,  and \, \,  \varphi\geq1 \, on\, R_{\lambda}(\Gamma)\Big\}.$$
\par
Based on the properties  of zero capacity sets, we can construct a  cut-off function $\varphi_{\varepsilon}\in C^{0,1}(\mathbb{R}^N,[0,1])$ satisfies that $\varphi_{\varepsilon}=0$ in $\mathcal{B}_{\varepsilon}^{\lambda}$, $\varphi_{\varepsilon}=1$ outside $\mathcal{B}_{2\varepsilon}^{\lambda}$ and
$$\int_{\R^N}|\nabla \varphi_{\varepsilon}|^2 = \int_{\mathcal{B}_{2\varepsilon}^{\lambda}}|\nabla \varphi_{\varepsilon}|^2<\varepsilon.$$
One can refer to \cite{Biagi-Esposito-Vecchi2021JDE, Esposito-Farina-Sciunzi2018JDE}and the references therein  for more explicit details about the cut-off functions.
\par


\section{Symmetry and monotonicity} \label{sec:proof1}
The following result is the starting point of the proof of Theorem \ref{Thm1.1}.
\begin{Lem}\label{Lem3.1}
Under the assumptions of Theorem \ref{Thm1.1}, for $\lambda <0$, we have that
\begin{equation}\label{eq3.1}
\int_{\Sigma_{\lambda}}|\nabla w_{\lambda}^{+}|^2dx
\leq  C\big(\mu, N, ||u||_{L^{\infty}(\Gamma^{\lambda})},\,   ||u||_{L^{2^*}(\R^N\setminus \Gamma)}\big),
\end{equation}
where $w_{\lambda}^{+}=\max\{w_{\lambda},0\}$, $\Gamma^{\lambda}=\{x:dist(x,R_{\lambda}(\Gamma))<\frac{  |  \lambda|}{2}   \}$ and
 $w_{\lambda}^{+}\chi_{\Sigma_{\lambda}}\in \mathcal{D}^{1,2}(\mathbb{R}^N\setminus\Gamma)$ for any $\lambda <0$.
\end{Lem}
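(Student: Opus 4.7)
The natural strategy is to test the difference of the weak formulations \eqref{eq2.1} and \eqref{eq2.2} against $w_\lambda^+\varphi_\varepsilon^2\chi_{\Sigma_\lambda}$, where $\varphi_\varepsilon$ is the cut-off from Section \ref{sec:prelim} vanishing on a neighborhood of $R_\lambda(\Gamma)$. Since $\lambda<0$ and $\Gamma\subset\{x_1=0\}$, the half-space $\Sigma_\lambda$ is disjoint from $\Gamma$, so only the reflected singular set $R_\lambda(\Gamma)\subset\{x_1=2\lambda\}$ needs to be bypassed by the cut-off. Because $w_\lambda^+$ is not a priori in $\mathcal{D}^{1,2}$, I would first carry out the computation with the bounded truncation $T_M(w_\lambda^+):=\min\{w_\lambda^+,M\}$, obtain an estimate uniform in both $M$ and $\varepsilon$, and then pass to the limit using Fatou.

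\textbf{Key identity.} Subtracting \eqref{eq2.1} from \eqref{eq2.2} with this test function and expanding $\nabla(w_\lambda^+\varphi_\varepsilon^2)=\varphi_\varepsilon^2\nabla w_\lambda^+ + 2w_\lambda^+\varphi_\varepsilon\nabla\varphi_\varepsilon$ yields
$$\int_{\Sigma_\lambda}|\nabla w_\lambda^+|^2\varphi_\varepsilon^2\,dx = -2\int_{\Sigma_\lambda} w_\lambda^+\varphi_\varepsilon\,\nabla w_\lambda^+\cdot\nabla\varphi_\varepsilon\,dx + \int_{\Sigma_\lambda}\bigl[v\,f(u)-v_\lambda\,f(u_\lambda)\bigr]w_\lambda^+\varphi_\varepsilon^2\,dx.$$
The cross term is handled by Young's inequality: half of $\int|\nabla w_\lambda^+|^2\varphi_\varepsilon^2$ is absorbed on the left, leaving a remainder bounded by $C\|w_\lambda^+\|_{L^\infty(\mathrm{supp}\,\nabla\varphi_\varepsilon)}^2\int|\nabla\varphi_\varepsilon|^2$. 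Since $\mathrm{supp}\,\nabla\varphi_\varepsilon\subset\Gamma^\lambda$ for $\varepsilon$ small and $\int|\nabla\varphi_\varepsilon|^2<\varepsilon$, this piece is bounded by $C\|u\|_{L^\infty(\Gamma^\lambda)}^2\,\varepsilon$ and vanishes as $\varepsilon\to 0$.

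\textbf{Controlling the non-local right-hand side.} Splitting
$$v\,f(u)-v_\lambda\,f(u_\lambda) = v\,\bigl[f(u)-f(u_\lambda)\bigr] + (v-v_\lambda)\,f(u_\lambda),$$
convexity of $f$ with $f(0)=0$ gives $[f(u)-f(u_\lambda)]\chi_{\{w_\lambda>0\}}\leq f'(u)\,w_\lambda^+$, and $(f_1)$ provides $f'(u)\leq C u^{2^*_\mu-2}$ and $F(u)\leq C u^{2^*_\mu}$. The second piece is treated through the reflection identity
$$v(x)-v_\lambda(x) = \int_{\Sigma_\lambda}\bigl[F(u(y))-F(u_\lambda(y))\bigr]\Bigl(\frac{1}{|x-y|^\mu}-\frac{1}{|x-y_\lambda|^\mu}\Bigr)\,dy,$$
whose kernel is non-negative for $x,y\in\Sigma_\lambda$. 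Applying the Hardy--Littlewood--Sobolev inequality (Proposition \ref{Pro1.1}) with the symmetric choice $r=t=\tfrac{2N}{2N-\mu}$ and the Sobolev embedding $\|w_\lambda^+\|_{L^{2^*}}\leq S\|\nabla w_\lambda^+\|_{L^2}$ converts both contributions into expressions controlled by $\|u\|_{L^{2^*}(\R^N\setminus\Gamma)}$ and $\|\nabla w_\lambda^+\|_{L^2(\Sigma_\lambda)}$.

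\textbf{Main obstacle.} The principal difficulty is that the HLS step produces a term that is quadratic in $\|\nabla w_\lambda^+\|_{L^2}$, hence not obviously absorbable on the left. The remedy is to keep the truncation $T_M$ present throughout so that the right-hand side is a priori finite; one then deduces $\int|\nabla T_M(w_\lambda^+)|^2\leq C$ uniformly in $M$ with a constant depending only on the quantities in \eqref{eq3.1}, and finally sends $M\to\infty$ (monotone convergence on the left, dominated convergence on the right) to obtain both the membership $w_\lambda^+\chi_{\Sigma_\lambda}\in\mathcal{D}^{1,2}(\R^N\setminus\Gamma)$ and the stated bound. A secondary but routine point is justifying the use of $w_\lambda^+\varphi_\varepsilon^2$ as a test function via density against $C_c^\infty(\R^N\setminus R_\lambda(\Gamma))$, which is standard once the truncation has been introduced.
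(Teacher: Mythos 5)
Your plan correctly sets up the weak formulation and identifies the right test function structure, but the step you flag as the ``main obstacle'' is indeed a genuine gap, and your proposed remedy does not close it. After applying Hardy--Littlewood--Sobolev to the terms $v\,f'(u)(w_\lambda^+)^2$ and $(v-v_\lambda)f(u)w_\lambda^+$ (each of which is quadratic in $w_\lambda^+$), you propose to control $\|w_\lambda^+\|_{L^{2^*}}$ via the Sobolev embedding, obtaining a right-hand side containing $C\,\|u\|_{L^{2^*}}^{2(2^*_\mu-1)}\|\nabla w_\lambda^+\|_{L^2}^2$. The coefficient is small only when $\|u\|_{L^{2^*}(\Sigma_\lambda)}$ is small, i.e.\ for $|\lambda|$ large --- and that is exactly the smallness used later in Step~1 of the proof of Theorem~\ref{Thm1.1}, not something available here for \emph{all} $\lambda<0$. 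Introducing the truncation $T_M(w_\lambda^+)$ does not help: truncation guarantees a priori finiteness of both sides, but an inequality $\int|\nabla T_M|^2\leq A+B\int|\nabla T_M|^2$ with $B\geq 1$ gives no bound uniform in $M$. The missing idea in the paper's proof is elementary and crucial: since $u_\lambda>0$, one has $w_\lambda^+=\max\{u-u_\lambda,0\}\leq u$ \emph{pointwise}, so $\|w_\lambda^+\|_{L^{2^*}(\Sigma_\lambda)}\leq\|u\|_{L^{2^*}(\Sigma_\lambda)}$. Using this bound both on the factor inside the Riesz convolution (coming from $F(u)-F(u_\lambda)$) and on the outer factor eliminates every occurrence of $w_\lambda^+$ in the $I_3$, $I_4$ estimates before any Sobolev embedding is invoked, yielding a bound that depends only on $\|u\|_{L^{2^*}(\R^N\setminus\Gamma)}$ and $\|u\|_{L^\infty(\Gamma^\lambda)}$ and is therefore valid for every $\lambda<0$.

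A secondary but non-cosmetic omission: your test function $w_\lambda^+\varphi_\varepsilon^2\chi_{\Sigma_\lambda}$ is not compactly supported (the cutoff $\varphi_\varepsilon$ equals $1$ outside a bounded neighborhood of $R_\lambda(\Gamma)$, and truncation $T_M$ bounds the amplitude but not the support), so it cannot be used directly against the $C_c^\infty$ weak formulations \eqref{eq2.1}--\eqref{eq2.2}. The paper handles this with a second cutoff $\varphi_R$ at infinity, whose gradient term $I_2$ is controlled using $u\in L^{2^*}$ and $|\nabla\varphi_R|\lesssim R^{-1}$ and vanishes as $R\to\infty$. Your density argument would need this additional cutoff to be made rigorous.
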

\begin{proof}
Let $\varphi_{\varepsilon}=\varphi_{\varepsilon}^{\lambda}$ be defined as before. In addition, let $\varphi_R\in C_c^{0,1}(\R^N)$ be a standard cut-off function such that  $\varphi_R=1$ in $B_R$ and $\varphi_R=0$ outside $B_{2R}$ with $|\nabla \varphi_R|\leq 2/R$, where the radius $R$ is large enough.  Define
$$\varphi :=w_{\lambda}^{+}\varphi_{\varepsilon}^2 { \varphi_R}^2   \chi_{\Sigma_{\lambda}\setminus R_{\lambda}(\Gamma)}=w_{\lambda}^{+}\varphi_{\varepsilon}^2 { \varphi_R}^2   \chi_{\Sigma_{\lambda}}.$$
Taking $\varphi$ as a test function in \eqref{eq2.1} and \eqref{eq2.2}, in the distributional sense it holds that
$$\aligned
\int_{\R^N\setminus\Gamma} \nabla u(x)\nabla \big[w^+_{\lambda}\varphi^2_{\varepsilon}\varphi^2_{R}\chi_{\Sigma_{\lambda}}\big]dx
=\int_{\R^N\setminus \Gamma} v(x)f(u(x))w^+_{\lambda}\varphi^2_{\varepsilon}\varphi^2_R dx.
\endaligned$$
and
$$\aligned
\nabla \big[w^+_{\lambda}\varphi^2_{\varepsilon}\varphi^2_{R}\chi_{\Sigma_{\lambda}}\big]
=\nabla w^+_{\lambda}\varphi^2_{\varepsilon}\varphi^2_{R}\chi_{\Sigma_{\lambda}}+ 2\nabla \varphi_{\varepsilon}w^+_{\lambda}\varphi_{\varepsilon}\varphi^2_{R}\chi_{\Sigma_{\lambda}}
+ 2\nabla \varphi_{R}w^+_{\lambda}\varphi^2_{\varepsilon}\varphi_{R}\chi_{\Sigma_{\lambda}}.
\endaligned$$
Thus it follows easily that
$$\aligned
&\int_{\R^N\setminus\Gamma}\nabla u(x)\nabla w^+_{\lambda}\varphi^2_{\varepsilon}\varphi^2_{R}\chi_{\Sigma_{\lambda}}dx=
\int_{\Sigma_{\lambda}}\nabla u(x)\nabla w^+_{\lambda}\varphi^2_{\varepsilon}\varphi^2_{R}dx =\int_{\Sigma_{\lambda}\setminus R_{\lambda}(\Gamma)}\nabla u(x)\nabla w^+_{\lambda}\varphi^2_{\varepsilon}\varphi^2_{R}dx \\
&=-2\int_{\Sigma_{\lambda}}\nabla u(x)\nabla \varphi_{\varepsilon}(x)w^+_{\lambda}\varphi_{\varepsilon}\varphi^2_Rdx
     -2\int_{\Sigma_{\lambda}}\nabla u(x)\nabla \varphi_{R}(x)w^+_{\lambda}\varphi^2_{\varepsilon}\varphi_Rdx \\
     &+\int_{\Sigma_{\lambda}}v(x)f(u(x))w^+_{\lambda}(x)\varphi^2_{\varepsilon}\varphi^2_Rdx \\
&=-2\int_{\Sigma_{\lambda}\setminus R_{\lambda}(\Gamma)}\nabla u(x)\nabla \varphi_{\varepsilon}(x)w^+_{\lambda}\varphi_{\varepsilon}\varphi^2_Rdx
     -2\int_{\Sigma_{\lambda}\setminus R_{\lambda}(\Gamma)}\nabla u(x)\nabla \varphi_{R}(x)w^+_{\lambda}\varphi^2_{\varepsilon}\varphi_Rdx \\
     &+\int_{\Sigma_{\lambda}\setminus R_{\lambda}(\Gamma)}v(x)f(u(x))w^+_{\lambda}(x)\varphi^2_{\varepsilon}\varphi^2_Rdx.
\endaligned$$
Moreover, we have
$$\aligned
\int_{\R^N\setminus R_{\lambda}(\Gamma)} \nabla u_{\lambda}(x)\nabla \big[w^+_{\lambda}\varphi^2_{\varepsilon}\varphi^2_{R}\chi_{\Sigma_{\lambda}}\big]dx
=\int_{\R^N\setminus R_{\lambda}(\Gamma)} v_{\lambda}(x)f(u_{\lambda}(x))w^+_{\lambda}\varphi^2_{\varepsilon}\varphi^2_R dx,
\endaligned$$
and
$$\aligned
&\int_{\R^N\setminus R_{\lambda}(\Gamma)}\nabla u_{\lambda}(x)\nabla w^+_{\lambda}\varphi^2_{\varepsilon}\varphi^2_{R}\chi_{\Sigma_{\lambda}}dx
=\int_{\Sigma_{\lambda}\setminus R_{\lambda}(\Gamma)}\nabla u_{\lambda}(x)\nabla w^+_{\lambda}\varphi^2_{\varepsilon}\varphi^2_{R}dx \\
&=-2\int_{\Sigma_{\lambda}\setminus R_{\lambda}(\Gamma)}\nabla u_{\lambda}(x)\nabla \varphi_{\varepsilon}(x)w^+_{\lambda}\varphi_{\varepsilon}\varphi^2_Rdx -2\int_{\Sigma_{\lambda}\setminus R_{\lambda}(\Gamma)}\nabla u_{\lambda}(x)\nabla \varphi_{R}(x)w^+_{\lambda}\varphi^2_{\varepsilon}\varphi_Rdx \\
&~~~~ +\int_{\Sigma_{\lambda}\setminus R_{\lambda}(\Gamma)}v_{\lambda}(x)f(u_{\lambda}(x))w^+_{\lambda}(x) \varphi^2_{\varepsilon}\varphi^2_Rdx.
\endaligned$$
By subtracting the two equalities we obtain that
$$\aligned
\int_{\Sigma_{\lambda}\setminus R_{\lambda}( \Gamma)} \nabla w_{\lambda}\nabla(w_{\lambda}^+) \varphi^2_{\varepsilon}\varphi^2_R dx
=& -2\int_{\Sigma_{\lambda}\setminus R_{\lambda}( \Gamma)}\nabla w_{\lambda}\nabla \varphi_{\varepsilon} (w_{\lambda}^+)\varphi_{\varepsilon}\varphi^2_{R}dx  \\
   &-2\int_{\Sigma_{\lambda}\setminus R_{\lambda}( \Gamma)}\nabla w_{\lambda}\nabla \varphi_{R} (w_{\lambda}^+)\varphi^2_{\varepsilon}\varphi_{R}dx  \\
   &+\int_{\Sigma_{\lambda}\setminus R_{\lambda}( \Gamma)}\Big(v(x)f(u(x))-v_{\lambda}(x)f(u_{\lambda}(x))\Big)
  (w_{\lambda}^+) \varphi^2_R\varphi^2_{\varepsilon} dx.
\endaligned$$
That is,
\begin{equation}\label{eq3.2}
\begin{aligned}
\int_{\Sigma_{\lambda}\setminus R_{\lambda}(\Gamma)}|\nabla w_{\lambda}^{+}|^2\varphi_{\varepsilon}^2 \varphi_R^2 dx
=&-2\int_{\Sigma_{\lambda}\setminus R_{\lambda}(\Gamma)}\nabla w_{\lambda}^{+}\nabla \varphi_{\varepsilon}
(w_{\lambda}^+)\varphi_R^2 \varphi_\varepsilon dx \\
 &-2\int_{\Sigma_{\lambda}}\nabla w_{\lambda}^{+}\nabla \varphi_{R} (w_{\lambda}^+) \varphi_{\varepsilon}^2 \varphi_R dx    \\
 &+\int_{\Sigma_{\lambda}\setminus R_{\lambda}(\Gamma)}\big[v(x)-v_{\lambda}(x)\big]f(u(x))
(w_{\lambda}^+)\varphi^2_{\varepsilon}\varphi^2_{R}dx  \\
& +\int_{\Sigma_{\lambda}\setminus R_{\lambda}(\Gamma)}v_{\lambda}(x)\big[f(u(x))-f(u_{\lambda}(x))\big]
(w_{\lambda}^+)\varphi^2_{\varepsilon}\varphi^2_{R}dx   \\
 =&:I_1 +I_2  +I_3+I_4.
\end{aligned}
\end{equation}
The estimates about terms $I_1$ and $I_2$ are very similar to the proof of  \cite[Theorem 1.3]{Sciunzi2017JMPA}, here we directly use the following results,
\begin{equation}\label{eq3.3}
\begin{aligned}
|I_1|
\leq
\frac{1}{4}\int_{\Sigma_{\lambda}}|\nabla w_{\lambda}^{+}|^2\varphi_{\varepsilon}^2  \varphi_R^2 dx +
 C\Big(N,\, ||u||_{L^{\infty}(\Gamma^{\lambda})}\Big),  \\
\end{aligned}
\end{equation}
and
\begin{equation}\label{eq3.4}
\begin{aligned}
|I_2|\leq
\frac{1}{4}\int_{\Sigma_{\lambda}}|\nabla w_{\lambda}^{+}|^2\varphi_{\varepsilon}^2  \varphi_R^2 dx +
+C\Big(N, \, ||u||_{L^{2^*}(\Sigma_{\lambda})}  \Big).    \\
\end{aligned}
\end{equation}
where $2\varepsilon< |\lambda|/2$ such that $\Sigma_{\lambda}\cap (\mathcal{B}_{2\varepsilon}^{\lambda}\setminus\mathcal{B}_{\varepsilon}^{\lambda} )\subset \mathcal{B}_{2\varepsilon}^{\lambda}\subset\Gamma^{\lambda}$.
Next we estimate the term $I_3$, it follows that
\begin{align*}
v(&x)-v_{\lambda}(x)
=\int_{\R^N\setminus \Gamma} \frac{F(u(y))}{|x-y|^{\mu}}dy-\int_{\R^N\setminus R_{\lambda}(\Gamma)} \frac{F(u_{\lambda}(y))}{|x-y|^{\mu}}dy \\
=&\int_{\R^N\setminus \Gamma} \frac{F(u(y))}{|x-y|^{\mu}}dy-\int_{\R^N\setminus\Gamma}
\frac{F(u_{\lambda}(y_{\lambda}))}{|x-y_{\lambda}|^{\mu}}dy \\
=&\int_{\R^N\setminus\Gamma} F(u(y)) \bigg(\frac{1}{|x-y|^\mu}-\frac{1}{|x_{\lambda}-y|^{\mu}}\bigg)dy\\
=&\int_{\Sigma_{\lambda}} F(u(y)) \bigg(\frac{1}{|x-y|^\mu}-\frac{1}{|x_{\lambda}-y|^{\mu}}\bigg)dy +\int_{\Sigma_{\lambda}^c\setminus\Gamma} F(u(y)) \bigg(\frac{1}{|x-y|^\mu}-\frac{1}{|x_{\lambda}-y|^{\mu}}\bigg)dy\\
=&\int_{\Sigma_{\lambda}} F(u(y)) \bigg(\frac{1}{|x-y|^\mu}-\frac{1}{|x_{\lambda}-y|^{\mu}}\bigg)dy +\int_{\Sigma_{\lambda}\setminus R_{\lambda}(\Gamma)} F(u(y_{\lambda})) \bigg(\frac{1}{|x-y_{\lambda}|^\mu}-\frac{1}{|x_{\lambda}-y_{\lambda}|^{\mu}}\bigg)dy\\
=&\int_{\Sigma_{\lambda}\setminus R_{\lambda}(\Gamma)} F(u(y)) \bigg(\frac{1}{|x-y|^\mu}-\frac{1}{|x_{\lambda}-y|^{\mu}}\bigg)dy +\int_{\Sigma_{\lambda}\setminus R_{\lambda}(\Gamma)} F(u_{\lambda}(y)) \bigg(\frac{1}{|x_{\lambda}-y|^\mu}-\frac{1}{|x-y|^{\mu}}\bigg)dy\\
=&\int_{\Sigma_{\lambda}\setminus R_{\lambda}(\Gamma)}\Big(F(u(y))-F(u_{\lambda}(y))\Big)\bigg(\frac{1}{|x-y|^{\mu}}-\frac{1}{|x_{\lambda}-y|^{\mu}}\bigg)dy.
\end{align*}
On the other hand, via the mean value theorem, it yields that
\begin{equation}\label{eq3.5}
F(u)-F(u_\lambda)=f(\xi_{\lambda}(y))(u(y)-u_{\lambda}(y))\leq f(u(y))(u(y)-u_{\lambda}(y))
\end{equation}
and
\begin{equation}\label{eq3.6}
f(u)-f(u_{\lambda})=f'(\zeta_{\lambda}(x))(u(x)-u_{\lambda}(x))\leq f'(u)(u(x)-u_{\lambda}(x)),
\end{equation}
for $u_{\lambda}(y)<\xi_{\lambda}(y)<u(y)$ and $u_{\lambda}(x)<\zeta_{\lambda}(x)<u(x)$ respectively.
Combining with the monotonicity of $f$ and $F$, then it follows from H\"{o}lder's and Hardy-Littlewood-Sobolev  inequality that
\begin{equation}\label{eq3.7}
\begin{aligned}
I_3
=&\int_{\Sigma_{\lambda}\setminus R_{\lambda}(\Gamma)} \big(v(x)-v_{\lambda}(x)\big) f(u(x))(w_{\lambda})^+(x)\varphi^2_{\varepsilon}\varphi^2_{R}dx\\
=&\int_{\Sigma_{\lambda}\setminus R_{\lambda}(\Gamma)}\bigg(\int_{\Sigma_{\lambda}\setminus R_{\lambda}(\Gamma)}\Big(F(u(y))-F(u_{\lambda}(y))\Big)\bigg(\frac{1}{|x-y|^{\mu}}
-\frac{1}{|x_{\lambda}-y|^{\mu}}\bigg)dy\\
&\times f(u(x))(w_{\lambda})^+(x)\varphi^2_{\varepsilon}(x)\varphi^2_R(x)\bigg)dx \\
\leq & \int_{\Sigma_{\lambda}\setminus R_{\lambda}(\Gamma)}\bigg( \int_{\Sigma^u_{\lambda}\setminus R_{\lambda}(\Gamma)}
\Big(F(u(y))-F(u_{\lambda}(y))\Big) \bigg(\frac{1}{|x-y|^{\mu}}-\frac{1}{|x_{\lambda}-y|^{\mu}}\bigg)dy \\
     &\times f(u(x))(w_{\lambda})^+(x)\varphi^2_{\varepsilon}\varphi^2_R \bigg)dx \\
\leq&  \int_{\Sigma_{\lambda}\setminus R_{\lambda}(\Gamma)}\bigg( \int_{\Sigma^u_{\lambda}} \frac{C(f)u^{2^*_{\mu}-1}(y)(w^+_{\lambda})(y)}{|x-y|^{\mu}}dy C_f u^{2^*_{\mu}-1}(x)(w_{\lambda})^+(x)\varphi^2_{\varepsilon}\varphi^2_R \bigg)dx \\
\leq& C(f,C_f) \iint\limits_{\Sigma_{\lambda}\times \Sigma_{\lambda}} \frac{u^{2^*_{\mu}}(y)u^{2^*_{\mu}-1}(x)}{|x-y|^{\mu}}(w_{\lambda})^+(x) \varphi^2_{\varepsilon}\varphi^2_R dxdy \\
\leq& C(f,C_f) C(\mu, N) \Big(\int_{\Sigma_{\lambda}}u^{2^*}(y)dy\Big)^{\frac{2^*_{\mu}}{2^*}}
\bigg(\int_{\Sigma_{\lambda}}\Big(u^{2^*_{\mu}-1}(x)(w_{\lambda})^+(x)
\varphi^2_{\varepsilon}(x)\varphi^2_{R}(x)\Big)^{\frac{2^*}{2^*_{\mu}}}
dx\bigg)^{\frac{2^*_{\mu}}{2^*}}\\
\end{aligned}
\end{equation}

\begin{equation*}
\begin{aligned}
\leq& C(f) C(\mu, N) ||u||^{2^*_{\mu}}_{L^{2^*}(\Sigma_{\lambda})} \bigg(\int_{\Sigma_{\lambda}\cap B_{2R}}
\Big(u^{2^*_{\mu}-1}(x)(w_{\lambda})^+(x)\Big)^{\frac{2^*}{2^*_{\mu}}}dx\bigg)^{\frac{2^*_{\mu}}{2^*}} \\
\leq& C(\mu, N, f) ||u||^{2^*_{\mu}}_{L^{2^*}(\Sigma_{\lambda})}\Big(\int_{\Sigma_{\lambda}\cap B_{2R}}u^{2^*}(x)dx\Big)^{\frac{2^*_{\mu}-1}{2^*}}
      \times \Big(\int_{\Sigma_{\lambda}\cap B_{2R}} (w^+_{\lambda})^{2^*}(x)dx\Big)^{\frac{1}{2^*}}\\
\leq& C(\mu, N,f) ||u||^{2^*_{\mu}}_{L^{2^*}(\Sigma_{\lambda})}||u||^{2^*_{\mu}-1}_{L^{2^*}(\Sigma_{\lambda})}||w^+_{\lambda}||_{L^{2^*}(\Sigma_{\lambda})}\\
\leq& C(\mu, N,f) ||u||^{2^*_{\mu}}_{L^{2^*}(\Sigma_{\lambda})}||u||^{2^*_{\mu}-1}_{L^{2^*}(\Sigma_{\lambda})}||u||_{L^{2^*}(\Sigma_{\lambda})}\\
\leq& C(\mu, N,f) ||u||^{2\cdot2^*_{\mu}}_{L^{2^*}(\Sigma_{\lambda})},
\end{aligned}
\end{equation*}
where $\Sigma^u_{\lambda}:=\{x\in \Sigma_{\lambda}\big| \, u(x)>u_{\lambda}(x)\}$. In a similar way, we may also estimate the upper bound for $I_4$. We first observe that
\begin{flalign*}
v_{\lambda}(x)=&
 \int_{\Sigma_{\lambda}\setminus R_{\lambda}(\Gamma)} \frac{F(u_{\lambda}(y))}{|x-y|^{\mu}}dy+\int_{\Sigma^c_{\lambda}} \frac{F(u_{\lambda}(y))}{|x-y|^{\mu}}dy\\
=& \int_{\Sigma_{\lambda}\setminus R_{\lambda}(\Gamma)} \frac{F(u_{\lambda}(y))}{|x-y|^{\mu}}dy+\int_{\Sigma^c_{\lambda}\setminus\Gamma} \frac{F(u_{\lambda}(y_{\lambda}))}{|x-y_{\lambda}|^{\mu}}dy_{\lambda}\\
=& \int_{\Sigma_{\lambda}\setminus R_{\lambda}(\Gamma)} \frac{F(u_{\lambda}(y))}{|x-y|^{\mu}}dy+\int_{\Sigma_{\lambda}\setminus R_{\lambda}(\Gamma)} \frac{F(u_{\lambda}(y_{\lambda}))}{|x-y_{\lambda}|^{\mu}}dy\\
=& \int_{\Sigma_{\lambda}\setminus R_{\lambda}(\Gamma)} \frac{F(u_{\lambda}(y))}{|x-y|^{\mu}}dy+\int_{\Sigma_{\lambda}\setminus R_{\lambda}(\Gamma)} \frac{F(u(y))}{|y-x_{\lambda}|^{\mu}}dy\\
=& \int_{\Sigma_{\lambda}\setminus R_{\lambda}(\Gamma)}\Big(\frac{F(u_{\lambda}(y))}{|x-y|^{\mu}}+\frac{F(u(y))}{|x_{\lambda}-y|^{\mu}}\Big)dy\\
\leq& \int_{\Sigma_{\lambda}\setminus R_{\lambda}(\Gamma)}\frac{F(u_{\lambda}(y))+F(u(y))}{|x-y|^{\mu}}dy\\
\leq& C(f) \int_{\Sigma_{\lambda}\setminus R_{\lambda}(\Gamma)}\frac{u^{2^*_{\mu}}_{\lambda}(y)+u^{2^*_{\mu}}(y)}{|x-y|^{\mu}}dy,\\
\end{flalign*}
Therefore, we have
\begin{equation}\label{eq3.8}
\begin{aligned}
I_4=&\int_{\Sigma_{\lambda}\setminus R_{\lambda}(\Gamma)}v_{\lambda}(x)\big[f(u(x))-f(u_{\lambda}(x))\big](w_{\lambda})^+(x)\varphi^2_{\varepsilon}\varphi^2_{R}dx   \\
\leq& C(f) \int_{\Sigma_{\lambda}\setminus R_{\lambda}(\Gamma)}\int_{\Sigma_{\lambda}\setminus R_{\lambda}(\Gamma)}\frac{u^{2^*_{\mu}}_{\lambda}(y)+u^{2^*_{\mu}}(y)}{|x-y|^{\mu}}
u^{2^*_{\mu}-2}(x)(w^+_{\lambda})^2(x)\varphi^2_{\varepsilon}(x)\varphi^2_{R}(x)dxdy \\
\leq& C(f)C(\mu, N) \bigg(\Big(\int_{\Sigma_{\lambda}\setminus R_{\lambda}(\Gamma)}u^{2^*}_{\lambda}dy\Big)^{\frac{2^*_{\mu}}{2^*}}+
\Big(\int_{\Sigma_{\lambda}}|u|^{2^*}dy\Big)^{\frac{2^*_{\mu}}{2^*}}\bigg) \\
&\times \Big(\int_{\Sigma_{\lambda}\cap B_{2R}}\big|u^{2^*_{\mu}-2}(w^+_{\lambda})^2(x)\big|^{\frac{2^*}{2^*_{\mu}}}dx\Big)^{\frac{2^*_{\mu}}{2^*}}\\
\leq& 2C(f) C(\mu, N) ||u||^{2^*_{\mu}}_{L^{2^*}(\R^N\setminus\Gamma)} \Big(\int_{\Sigma_{\lambda}\cap B_{2R}}
u^{2^*}dx\Big)^{\frac{2^*_{\mu}}{2^*}}\\
\leq& C(\mu, N, f) ||u||^{2^*_{\mu}}_{L^{2^*}(\R^N\setminus\Gamma)}||u||^{2^*_{\mu}}_{L^{2^*}(\Sigma_{\lambda})},
\end{aligned}
\end{equation}
where $ C(\mu, N, f)>0$ depends  only on $N, f$ and $\mu$. Talking $($\ref{eq3.3}$)$-$($\ref{eq3.4}$)$ and $($\ref{eq3.7}$)$-$($\ref{eq3.8}$)$ into $($\ref{eq3.2}$)$, we deduce that
$$\aligned
\int_{\Sigma_{\lambda}\setminus R_{\lambda}(\Gamma)}|\nabla w_{\lambda}^{+}|^2\varphi_{\varepsilon}^2 \varphi_R^2 dx
\leq& \frac{1}{4}\int_{\Sigma_{\lambda}}|\nabla w_{\lambda}^{+}|^2\varphi_{\varepsilon}^2  \varphi_R^2 dx +
 C\Big(N,\, ||u||_{L^{\infty}(\Gamma^{\lambda})}\Big)\\
 &+\frac{1}{4}\int_{\Sigma_{\lambda}}|\nabla w_{\lambda}^{+}|^2\varphi_{\varepsilon}^2  \varphi_R^2 dx +
+C\Big(N, \, ||u||_{L^{2^*}(\Sigma_{\lambda})}  \Big)\\
&+C(\mu, N,f) ||u||^{2\cdot2^*_{\mu}}_{L^{2^*}(\Sigma_{\lambda})}+C(\mu, N, f) ||u||^{2^*_{\mu}}_{L^{2^*}(\R^N\setminus\Gamma)}||u||^{2^*_{\mu}}_{L^{2^*}(\Sigma_{\lambda})},
\endaligned$$
i.e.  $$\aligned
\int_{\Sigma_{\lambda}\setminus R_{\lambda}(\Gamma)}|\nabla w_{\lambda}^{+}|^2\varphi_{\varepsilon}^2 \varphi_R^2 dx
\leq& 2 C\Big(N,\, ||u||_{L^{\infty}(\Gamma^{\lambda})}\Big)+2C\Big(N, \, ||u||_{L^{2^*}(\Sigma_{\lambda})}  \Big)\\
&+2C(\mu, N,f) ||u||^{2\cdot2^*_{\mu}}_{L^{2^*}(\Sigma_{\lambda})}+2C(\mu, N, f) ||u||^{2^*_{\mu}}_{L^{2^*}(\R^N\setminus\Gamma)}||u||^{2^*_{\mu}}_{L^{2^*}(\Sigma_{\lambda})}.
\endaligned$$
Notice that $\varphi_{\varepsilon},\varphi_R\to 1,$ as $(R,\varepsilon)\to(+\infty,0)$,   it follows from the  Fatou's lemma that
$$\aligned
\int_{\Sigma_{\lambda}}|\nabla w_{\lambda}^{+}|^2dx
\leq& \liminf\limits_{(R,\varepsilon)\to(+\infty,0)}\int_{\Sigma_{\lambda}\setminus R_{\lambda}(\Gamma)}|\nabla w_{\lambda}^{+}|^2\varphi_{\varepsilon}^2 \varphi_R^2 dx\\
\leq& 2 C\Big(N,\, ||u||_{L^{\infty}(\Gamma^{\lambda})}\Big)+2C\Big(N, \, ||u||_{L^{2^*}(\Sigma_{\lambda})}  \Big)\\
&+2C(\mu, N,f) ||u||^{2\cdot2^*_{\mu}}_{L^{2^*}(\Sigma_{\lambda})}+2C(\mu, N, f) ||u||^{2^*_{\mu}}_{L^{2^*}(\R^N\setminus\Gamma)}||u||^{2^*_{\mu}}_{L^{2^*}(\Sigma_{\lambda})}\\
\leq&  C\big(\mu, N, ||u||_{L^{\infty}(\Gamma^{\lambda})},||u||_{L^{2^*}(\Sigma_{\lambda})},||u||_{L^{2^*}(\R^N\setminus \Gamma)}\big).
\endaligned$$
The desired estimates follow and the proof is completed.
\end{proof}

\par
Note that $0\leq (w_{\lambda})^+\leq u$, so we deduce from  Lemma \ref{Lem3.1}  that, for $\lambda <0$,
\begin{equation}\label{eq3.9}
\int_{\Sigma_{\lambda}}|\nabla w_{\lambda}^{+}|^2(w_{\lambda})^+dx<+\infty.
\end{equation}

In what follows, we are devoted to the proof of Theorem \ref{Thm1.1}.
\par
$\mathbf{Proof~of~Theorem~\ref{Thm1.1}.}$~~

\textbf{Step1:} we assert that, for $\lambda <0$ with $|\lambda|$ sufficiently large, it follows that $u\leq u_{\lambda}$ in $\Sigma_{\lambda}\setminus R_{\lambda}(\Gamma)$.  \\
By using the same notations and construction for $\varphi_{\varepsilon}$ and $\varphi_R$, and then consider the following test function $\psi:=(w^+_{\lambda})^2\varphi^2_{\varepsilon}\varphi^2_R\chi_{\Sigma_{\lambda}}$. Thus we  get that
$$\aligned
\nabla \psi(x)=&2\nabla (w_{\lambda})^+ (w_{\lambda})^+\varphi^2_{\varepsilon}\varphi^2_R\chi_{\Sigma_{\lambda}}
+ 2\nabla \varphi_{\varepsilon} (w_{\lambda}^+)^2\varphi_{\varepsilon}\varphi^2_R\chi_{\Sigma_{\lambda}}\\
&+ 2\nabla \varphi_{R} (w_{\lambda}^+)^2\varphi^2_{\varepsilon}\varphi_R\chi_{\Sigma_{\lambda}}.
\endaligned$$
Thus, together with
\begin{equation*}\label{eq3.10}
\begin{aligned}
\int_{\R^N\setminus \Gamma}\nabla u \nabla \psi dx=\int_{\R^N}v(x)f(u(x))\psi(x)dx
\end{aligned}
\end{equation*}
and
\begin{equation*}\label{eq3.11}
\begin{aligned}
\int_{\R^N\setminus R_{\lambda}(\Gamma)}\nabla u_{\lambda} \nabla \psi dx=\int_{\R^N\setminus R_{\lambda}(\Gamma)}f(u_{\lambda}(x))\psi(x)dx,
\end{aligned}
\end{equation*}
by repeating the proof of Lemma \ref{Lem3.1}, we have that
$$\aligned
\int_{\Sigma_{\lambda}\setminus R_{\lambda}(\Gamma)}\nabla u(x) \nabla\big[(w^+_{\lambda})^2\varphi^2_{\varepsilon}\varphi^2_{R}\big]dx
=\int_{\Sigma_{\lambda}\setminus R_{\lambda}(\Gamma)}v(x)f(u(x))[(w_{\lambda}^+)]^2(x)\varphi^2_{\varepsilon}\varphi^2_{R}\big]dx
\endaligned$$
and
$$\aligned
\int_{\Sigma_{\lambda}\setminus R_{\lambda}(\Gamma)}\nabla u_{\lambda}(x) \nabla\big[(w^+_{\lambda})^2\varphi^2_{\varepsilon}\varphi^2_{R}\big]dx
=\int_{\Sigma_{\lambda}\setminus R_{\lambda}(\Gamma)}v_{\lambda}(x)f(u_{\lambda}(x))[ (w_{\lambda}^+)]^2(x)\varphi^2_{\varepsilon}\varphi^2_{R}\big]dx.
\endaligned$$
Subtracting, we get
\begin{equation}\label{eq3.12}
\begin{aligned}
2\int_{\Sigma_{\lambda}}\big|\nabla w^+_{\lambda}\big|^2 w^+_{\lambda}\varphi^2_{\varepsilon}\varphi^2_R dx
=&-2\int_{\Sigma_{\lambda}}\nabla w^+_{\lambda}\nabla \varphi_{\varepsilon} \varphi_{\varepsilon}(w^+_{\lambda})^2\varphi^2_R dx
-2\int_{\Sigma_{\lambda}}\nabla w^+_{\lambda}\nabla \varphi_R \varphi_R(w^+_{\lambda})^2\varphi^2_{\varepsilon} dx \\
    &+\int_{\Sigma_{\lambda}\setminus R_{\lambda}(\Gamma)} \big[v(x)-v_{\lambda}(x)\big] f(u(x)) (w^+_{\lambda})^2(x)\varphi^2_{\varepsilon}\varphi^2_Rdx  \\
    &+\int_{\Sigma_{\lambda}\setminus R_{\lambda}(\Gamma)}v_{\lambda}(x) \big[f(u(x))-f(u_{\lambda}(x))\big](w^+_{\lambda})^2(x)\varphi^2_{\varepsilon}\varphi^2_Rdx  \\
=&:\tilde{ I_1}+\tilde{I_2}+\tilde{I_3}+\tilde{I_4}.
\end{aligned}
\end{equation}
Fix $R>0$ sufficiently large and $\varepsilon$ small enough, we estimate $\tilde{I_1}$, $\tilde{I_2}$ and $\tilde{I_3}$ term by term. For $\tilde{I_1}$, by  Cauchy's inequality we get
\begin{equation}\label{eq3.13}
\begin{aligned}
|\tilde{I_1}|=& 2\Big|\int_{\Sigma_{\lambda}} \nabla w^+_{\lambda}\nabla\varphi_{\varepsilon} \varphi_{\varepsilon}(w^+_{\lambda})^2\varphi^2_R dx\Big|
\leq 2\int_{\Sigma_{\lambda}}\big|\nabla w^+_{\lambda}\big| |\nabla \varphi_{\varepsilon}|(w^+_{\lambda})^2\varphi_{\varepsilon}\varphi^2_R dx\\
\leq & 2\int_{\Sigma_{\lambda}\cap(\mathcal{B}^{\lambda}_{2\varepsilon}\setminus\mathcal{B}^{\lambda}_{\varepsilon})\cap(B_{2R}\setminus B_R)}  \big|\nabla w^+_{\lambda}\big| |\nabla \varphi_{\varepsilon}|(u)^2dx \\
\leq& 2||u||^2_{L^{\infty}(\mathcal{B}^{\lambda}_{2\varepsilon}\setminus\mathcal{B}^{\lambda}_{\varepsilon})} \int_{\Sigma_{\lambda}\cap(\mathcal{B}^{\lambda}_{2\varepsilon}\setminus\mathcal{B}^{\lambda}_{\varepsilon})}
   \big|\nabla (w_{\lambda}^+)\big| |\nabla \varphi_{\varepsilon}| dx\\
\leq&  2||u||^2_{L^{\infty}(\Gamma^{\lambda})}\Big(\int_{\Sigma_{\lambda}\cap
(\mathcal{B}^{\lambda}_{2\varepsilon}\setminus\mathcal{B}^{\lambda}_{\varepsilon})}
\big|\nabla w^+_{\lambda}\big|^2dx\Big)^{\frac{1}{2}}\Big(\int_{\mathcal{B}^{\lambda}_{2\varepsilon}}|\nabla\varphi_{\varepsilon}|^2dx\Big)^{\frac{1}{2}}\\
\leq & 2\sqrt{\varepsilon}||u||^2_{L^{\infty}(\Gamma^{\lambda})}
\int_{\Sigma_{\lambda}}\Big(|\nabla w^+_{\lambda}|^2dx\Big)^{\frac{1}{2}},
\end{aligned}
\end{equation}
where $\Gamma^{\lambda}:=\{x: dist(x, R_{\lambda}(\Gamma))<\frac{|\lambda|}{2}\}$ with $\varepsilon<\frac{|\lambda|}{4}.$  It follows from Lemma $\ref{Lem3.1}$ that  $|\tilde{I_1}|\to0$ as $\varepsilon\to0^+$.
By  the  H\"{o}lder's inequality, we get the following  estimate of $\tilde{I_2}$,
\begin{equation}\label{eq3.14}
\begin{aligned}
|\tilde{I_2}|
=& 2\Big|\int_{\Sigma_{\lambda}\setminus R_{\lambda}(\Gamma)}\nabla w^+_{\lambda}\nabla \varphi_R \varphi_R(w^+_{\lambda})^2\varphi^2_{\varepsilon}dx\Big|\\
\leq &2 \int_{\Sigma_{\lambda}}\big|\nabla w^+_{\lambda}\big| |\nabla \varphi_R| \varphi_R (w^+_{\lambda})^2\varphi^2_{\varepsilon} dx \\
\leq & 2\Big(\int_{\Sigma_{\lambda}}\big|\nabla w^+_{\lambda}\big|^2dx\Big)^{\frac{1}{2}}\times \Big(\int_{\Sigma_{\lambda}\cap(B_{2R}\setminus B_R)}|\nabla \varphi_{R}|^2(w^+_{\lambda})^4dx\Big)^{\frac{1}{2}}\\
\leq & 2 ||u||_{L^{\infty}(\Sigma_{\lambda})}\Big(\int_{\Sigma_{\lambda}}\big|\nabla w^+_{\lambda}\big|^2dx\Big)^{\frac{1}{2}}\times \Big(\int_{\Sigma_{\lambda}\cap(B_{2R}\setminus B_R)}|\nabla \varphi_{R}|^2(w^+_{\lambda})^2dx\Big)^{\frac{1}{2}}\\
\leq & 2 ||u||_{L^{\infty}(\Sigma_{\lambda})}\Big(\int_{\Sigma_{\lambda}}\big|\nabla w^+_{\lambda}\big|^2dx\Big)^{\frac{1}{2}}
\Big(\int_{B_{2R}\setminus B_R}|\nabla \varphi_R|^Ndx\Big)^{\frac{1}{N}} \Big(\int_{\Sigma_{\lambda}\cap(B_{2R}\setminus B_R)}|u|^{2^*}dx\Big)^{\frac{1}{2^*}} \\
\leq & 2 C(N)||u||_{L^{\infty}(\Sigma_{\lambda})}\Big(\int_{\Sigma_{\lambda}}\big|\nabla w^+_{\lambda}\big|^2dx\Big)^{\frac{1}{2}}
\Big(\int_{\Sigma_{\lambda}\cap(B_{2R}\setminus B_R)}|u|^{2^*}dx\Big)^{\frac{1}{2^*}} .
\end{aligned}
\end{equation}
Notice that the last integral in \eqref{eq3.14} above tends to zero when $R\to+\infty$ since $u\in L^{2^*}(\Sigma_{\lambda}) $ due to Lemma \ref{Lem3.1}. Hence, we have $|\tilde{I_2}|\to0$ as $R\to +\infty$. Similar to the proof Lemma \ref{Lem3.1}, combining the mean value theorem and $(f_1)$, we obtain
\begin{equation}\label{eq3.15}
\begin{aligned}
\tilde{I_3}=&\int_{\Sigma_{\lambda}\setminus R_{\lambda}(\Gamma)} \big(v(x)-v_{\lambda}(x)\big)f(u(x))(w^+_{\lambda})^2(x)\varphi^2_{\varepsilon}\varphi^2_Rdx\\
=& \int_{\Sigma_{\lambda}\setminus R_{\lambda}(\Gamma)} \bigg(\int_{\Sigma_{\lambda}\setminus R_{\lambda}(\Gamma)}\Big(F(u(y))-F(u_{\lambda}(y))\Big)\Big(\frac{1}{|x-y|^{\mu}}-\frac{1}{|x_{\lambda}-y|^{\mu}}\Big)dy\\
     &\times f(u(x))(w^+_{\lambda})^2(x)\varphi^2_{\varepsilon}\varphi^2_R\bigg)dx  \\
\leq& \int_{\Sigma_{\lambda}\setminus R_{\lambda}(\Gamma)} \bigg(\int_{\Sigma^u_{\lambda}\setminus R_{\lambda}(\Gamma)}
\Big(F(u(y))-F(u_{\lambda}(y))\Big) \Big(\frac{1}{|x-y|^{\mu}}-\frac{1}{|x_{\lambda}-y|^{\mu}}\Big)dy \\
&\times  f(u(x))(w^+_{\lambda})^2(x)\varphi^2_{\varepsilon}\varphi^2_R\bigg)dx  \\
\leq&  \int_{\Sigma_{\lambda}\setminus R_{\lambda}(\Gamma)} \bigg(\int_{\Sigma^u_{\lambda}\setminus R_{\lambda}(\Gamma)}
\Big(F(u(y))-F(u_{\lambda}(y))\Big) \Big(\frac{1}{|x-y|^{\mu}}\Big)dy \\
 &\times  f(u(x))(w^+_{\lambda})^2(x)\varphi^2_{\varepsilon}\varphi^2_R\bigg)dx  \\
\leq&  \int_{\Sigma_{\lambda}\setminus R_{\lambda}(\Gamma)} \bigg( \int_{\Sigma^u_{\lambda}\setminus R_{\lambda}(\Gamma)}\frac{C(f)u^{2^*_{\mu}-1}(y)(w^+_{\lambda})(y)}{|x-y|^{\mu}}dy C_f u^{2^*_{\mu}-1}(x)(w^+_{\lambda})^2(x)\varphi^2_{\varepsilon}\varphi^2_R \bigg)dx \\
\leq& C(f,C_f) \iint_{\Omega\times\Omega}\frac{u^{2^*_{\mu}-1}(y)( w^+_{\lambda})(y)u^{2^*_{\mu}-1}(x)(w^+_{\lambda})^2(x)}{|x-y|^{\mu}}dxdy,\\
\end{aligned}
\end{equation}
where $\Omega:=\Sigma_{\lambda}\setminus R_{\lambda}(\Gamma).$  Note that
$$\aligned
u^{2^*_{\mu}-1}(y)(w^+_{\lambda})(y)u^{2^*_{\mu}-1}(x)(w^+_{\lambda})^2(x)
\leq u^{2^*_{\mu}}(y)u^{2^*_{\mu}-2}(x)(w^+_{\lambda})^3(x)
\endaligned$$
for $u(x)(w^+_{\lambda})(y)\leq u(y)(w^+_{\lambda})(x)$  and
$$\aligned
u^{2^*_{\mu}-1}(y)(w^+_{\lambda})(y)u^{2^*_{\mu}-1}(x)(w^+_{\lambda})^2(x)
\leq u^{2^*_{\mu}+1}(x)u^{2^*_{\mu}-3}(y)(w^+_{\lambda})^3(y)
\endaligned$$
for $u(x)(w^+_{\lambda})(y)\geq u(y)(w^+_{\lambda})(x).$
Define $A:=\big\{(x,y)\in \Omega\times\Omega \big| u(x)(w^+_{\lambda})(y)\leq u(y)(w^+_{\lambda})(x)\big\}$ and $A^c:=(\Omega\times\Omega )\setminus A,$ then
$$\aligned
\tilde{I_3}
\leq & C(f,C_f) \int_{\Omega\times\Omega}\frac{u^{2^*_{\mu}-1}(y)(w^+_{\lambda})(y)u^{2^*_{\mu}-1}(x)(w^+_{\lambda})^2(x)}{|x-y|^{\mu}}dxdy\\
\leq &C(f) \bigg\{\int_{A}+\int_{A^c}\bigg\}\frac{u^{2^*_{\mu}-1}(y)(w^+_{\lambda})(y)u^{2^*_{\mu}-1}(x)(w^+_{\lambda})^2(x)}{|x-y|^{\mu}}dxdy\\
=&:C(f)(K_1+K_2).
\endaligned$$
Since $N-2<\mu<N$, by H\"{o}lder's inequality and HLS inequality we have that
$$\aligned
K_1=&\int_{A}\frac{u^{2^*_{\mu}-1}(y)(w^+_{\lambda})(y)u^{2^*_{\mu}-1}(x)(w^+_{\lambda})^2(x)}{|x-y|^{\mu}}dxdy \\
\leq& \int_{A}\frac{u^{2^*_{\mu}}(y)u^{2^*_{\mu}-2}(x)(w^+_{\lambda})^3(x)}{|x-y|^{\mu}}dxdy \\
\leq& C(\mu,N)\bigg(\int_{\Omega}|u^{2^*_{\mu}}(x)|^{\frac{2^*}{2^*_{\mu}}} dx\bigg)^{\frac{2^*_{\mu}}{2^*}} \bigg(\int_{\Omega}\Big|u^{2^*_{\mu}-2}(y) \big[(w^+_{\lambda})^{\frac{3}{2}}\big]^2\Big|^{ \frac{2^*}{2^*_{\mu}}}\bigg)^{\frac{2^*_{\mu}}{2^*}}\\
\leq&C(\mu,N) ||u||^{2^*_{\mu}}_{L^{2^*}(\Sigma_{\lambda})}
||u||^{2^*_{\mu}-2}_{L^{2^*}(\Sigma_{\lambda})}\Big( \int_{\Sigma_{\lambda}}\big[(w^+_{\lambda})^{\frac{3}{2}}\big]^{2^*}dx\Big)^{\frac{2}{2^*}}\\
\leq&C(\mu,N) ||u||^{2(2^*_{\mu}-1)}_{L^{2^*}(\Sigma_{\lambda})}    \Big(\int_{\Sigma_{\lambda}}\big[(w^+_{\lambda})^{\frac{3}{2}}\big]^{2^*}dx
\Big)^{\frac{2}{2^*}}
\endaligned$$
and
$$\aligned
K_2=&\int_{A^c}\frac{u^{2^*_{\mu}-1}(y)(w^+_{\lambda})(y)u^{2^*_{\mu}-1}(x)(w^+_{\lambda})^2(x)}{|x-y|^{\mu}}dxdy \\
\leq& \int_{A^c}\frac{u^{2^*_{\mu}-3}(y)u^{2^*_{\mu}+1}(x)(w^+_{\lambda})^3(y)}{|x-y|^{\mu}}dxdy \\
\leq&C(\mu,N) \bigg(\int_{A^c}|u^{2^*_{\mu}+1}(x)|^{\frac{2^*}{2^*_{\mu}+1}}dx\bigg)^{\frac{2^*_{\mu}+1}{2^*}} \bigg(\int_{A^c}\Big|u^{2^*_{\mu}-3}(y)\big[(w^+_{\lambda})^{\frac{3}{2}}\big]^2\Big|^{ \frac{2^*}{2^*_{\mu}-1}}dy\bigg)^{\frac{2^*_{\mu}-1}{2^*}}\\
\leq&C(\mu,N) ||u||^{2^*_{\mu}-3}_{L^{2^*}(\Sigma_{\lambda})}
||u||^{2^*_{\mu}+1}_{L^{2^*}(\Sigma_{\lambda})}\Big(\int_{\Sigma_{\lambda}}\big[(w^+_{\lambda})^{\frac{3}{2}}\big]^{2^*}dx\Big)^{\frac{2}{2^*}}\\
\leq&C(\mu,N) ||u||^{2(2^*_{\mu}-1)}_{L^{2^*}(\Sigma_{\lambda})}    \Big(\int_{\Sigma_{\lambda}}\big[(w^+_{\lambda})^{\frac{3}{2}}\big]^{2^*}dx\Big)^{\frac{2}{2^*}}.
\endaligned$$
Since
$$\aligned
\bigg(\int_{\Sigma_{\lambda}}\big[(w^+_{\lambda})^{\frac{3}{2}}\big]^{2^*}dx\bigg)^{\frac{2}{2^*}}
\leq S^2\int_{\Sigma_{\lambda}}\big|\nabla (w^+_{\lambda})^{\frac{3}{2}}\big|^2dx
=\frac{9}{4}S^2\int_{\Sigma_{\lambda}}\big|\nabla w^+_{\lambda}\big|^2w^+_{\lambda}dx,
\endaligned$$
where $S$ is the Sobolev  embedding constant, combining this and the above inequalities, we have
$$\aligned
K_1,K_2\leq S^2C(\mu,N)||u||^{2(2^*_{\mu}-1)}_{L^{2^*}(\Sigma_{\lambda})}
\int_{\Sigma_{\lambda}}|\nabla w^+_{\lambda}|^2w^+_{\lambda}dx.
\endaligned$$
Hence
$$\tilde{I_3}\leq S^2 C(\mu,N,f)||u||^{2(2^*_{\mu}-1)}_{L^{2^*}(\Sigma_{\lambda})}
                \int_{\Sigma_{\lambda}}|\nabla w^+_{\lambda}|^2w^+_{\lambda}dx,$$
where $C(\mu,N,f)>0$ depends only on $\mu,$ $f$ and $N$.
Finally, we focus on the estimate of  $ \tilde{I_4}$. Write  also $A^u_{\lambda}:=\big\{x\in \R^N\setminus R_{\lambda}(\Gamma)\big| u(x)>u_{\lambda}(x)\big\}.$  We deduce from the monotonous properties of $f$, HLS inequality and  H\"{o}lder's inequality that
\begin{equation}\label{eq3.16}
\begin{aligned}
 \tilde{I_4}
 =& \int_{\Omega} \bigg(\int_{\Omega}\Big(\frac{F(u_{\lambda}(y))}{|x-y|^{\mu}} +\frac{F(u(y))}{|x_{\lambda}-y|^{\mu}}\Big)dy\times \big(f(u(x))-f(u_{\lambda}(x))\big)(w^+_{\lambda})^2(x)\varphi^2_{\varepsilon}\varphi^2_R \bigg)dx\\
  \leq& C(f) \int_{\Omega}dx \bigg(\int_{\Omega}\frac{u^{2^*_{\mu}}_{\lambda}(y)+u^{2^*_{\mu}}(y)}{|x-y|^{\mu}}dy \times u^{2^*_{\mu}-2}(x)(w^+_{\lambda})^3(x)\varphi^2_{\varepsilon}\varphi^2_{R}\bigg)\\
 \leq& C(\mu, N)C(f)\Big(||u||^{2^*_{\mu}}_{L^{2^*}(\Sigma_{\lambda})} +||u||^{2^*_{\mu}}_{L^{2^*}(\R^N\setminus \Gamma)}\Big)  \Bigg(\int_{\Omega} \bigg(u^{2^*_{\mu}-2}(x)\big[ (w^+_{\lambda})^{\frac{3}{2}}(x)\big]^2
     \varphi^2_{\varepsilon}\varphi^2_{R}\bigg)^{\frac{2^*}{2^*_{\mu}}} dx\Bigg)^{\frac{2^*_{\mu}}{2^*}} \\
 \leq&  2 C(\mu, N)C(f) ||u||^{2^*_{\mu}}_{L^{2^*}(\R^N\setminus \Gamma)}\bigg(\int_{\Omega}\Big(u^{2^*_{\mu}-2}(x)\big[(w^+_{\lambda})^{\frac{3}{2}}(x)\big]^2
 \varphi^2_{\varepsilon}\varphi^2_R\Big)^{\frac{2^*}{2^*_{\mu}}}dx\bigg)^{ \frac{2^*_{\mu}}{2^*}}\\
 \leq& 2 C(\mu, N)C(f) |u||^{2^*_{\mu}}_{L^{2^*}(\R^N\setminus \Gamma)}\Big(\int_{\Sigma_{\lambda}}u^{2^*}dx\Big)^{ \frac{2^*_{\mu}-2}{2^*}}\Big(\int_{\Sigma_{\lambda}\setminus R_{\lambda}(\Gamma)}\big[(w^+_{\lambda})^{\frac{3}{2}}\big]^{2^*}dx\Big)^{\frac{2}{2^*}} \\
 \leq& 2C(\mu,N)C(f)S^2||u||^{2^*_{\mu}}_{L^{2^*}(\R^N\setminus\Gamma)} ||u||^{2^*_{\mu}-2}_{L^{2^*}(\Sigma_{\lambda})}\Big(\int_{\Sigma_{\lambda}} \big|\nabla(w^+_{\lambda})^{ \frac{3}{2}}\big|^{2}dx\Big)\\
 \leq&  2\times{\frac{9}{4}} C(\mu, N)C(f)S^2 ||u||^{2^*_{\mu}}_{L^{2^*}(\R^N\setminus\Gamma)}||u||^{2^*_{\mu}-2}_{L^{2^*}( \Sigma_{\lambda})} \Big(\int_{\Sigma_{\lambda}}\big|\nabla(w^+_{\lambda})\big|^{2}(w^+_{\lambda})dx\Big) \\
 \leq&  C(\mu, N,f)S^2
 ||u||^{2^*_{\mu}}_{L^{2^*}(\R^N\setminus\Gamma)}||u||^{2^*_{\mu}-2}_{L^{2^*}(\Sigma_{\lambda})}\Big(\int_{\Sigma_{\lambda}}
        \big|\nabla(w^+_{\lambda})\big|^{2}w^+_{\lambda}dx\Big),
 \end{aligned}
\end{equation}
where $S$ is the Sobolev imbedding constant from $\mathcal{D}^{1,2}(\mathbb{R}^N)$ to $L^{2^*}(\mathbb{R}^N)$. It follows from Lemma \ref{Lem3.1} and \eqref{eq3.9} that the integral $||u||_{L^{2^*}(\Sigma_{\lambda}\cap(B_{2R}\setminus B_R))}\to 0$ as $R\to+\infty$ or $\lambda\to-\infty$.
Inserting $($\ref{eq3.13}$)$ - $($\ref{eq3.16}$)$ into $($\ref{eq3.12}$)$ and let $\varepsilon\to 0^+$ and $R\to +\infty$, respectively. It follows from \eqref{eq3.9} and the Lebesgue dominated convergence theorem that
$$\aligned
2\int_{\Sigma_{\lambda}}\big|\nabla w^+_{\lambda}\big|^2 w^+_{\lambda} dx
\leq&\liminf\limits_{(\varepsilon,R)\to(0,+\infty)}2\int_{\Sigma_{\lambda}}\big|\nabla w^+_{\lambda}\big|^2 w^+_{\lambda}\varphi^2_{\varepsilon}\varphi^2_R dx\\
\leq&\liminf\limits_{(\varepsilon,R)\to(0,+\infty)}\big\{ |\tilde{ I_1}|+|\tilde{I_2}|+\tilde{I_3}+\tilde{I_4}\big\} \\
\leq& 2 S^2C(\mu, N,f)
||u||^{2^*_{\mu}}_{L^{2^*}(\R^N\setminus\Gamma)}
||u||^{2^*_{\mu}-2}_{L^{2^*}(\Sigma_{\lambda})}\Big(\int_{\Sigma_{\lambda}}
        \big|\nabla(w^+_{\lambda})\big|^{2}w^+_{\lambda}dx\Big) .
\endaligned$$
Thus,
\begin{equation}\label{eq3.17}
\int_{\Sigma_{\lambda}} |\nabla w_{\lambda}^+|^2 w_{\lambda}^+ dx
\leq  S^2C(\mu, N,f)
 ||u||^{2^*_{\mu}}_{L^{2^*}(\R^N\setminus\Gamma)}
 ||u||^{2^*_{\mu}-2}_{L^{2^*}(\Sigma_{\lambda})}\Big(\int_{\Sigma_{\lambda}}
        \big|\nabla(w^+_{\lambda})\big|^{2}w^+_{\lambda}dx\Big)  ,
\end{equation}
Observe that $u\in L^{2^*}(\Sigma_{\lambda})$ implies that $||u||_{L^{2^*}(\Sigma_{\lambda})}\to0$ as $\lambda\to -\infty.$ As a consequence, there is a sufficiently large positive constant $M$ such that, for all $\lambda<-M$,
$$   S^2C(\mu, N,f)
 ||u||^{2^*_{\mu}}_{L^{2^*}(\R^N\setminus\Gamma)}
 ||u||^{2^*_{\mu}-2}_{L^{2^*}(\Sigma_{\lambda})} <1.$$
Thereby, $\int_{\Sigma_{\lambda}}\big|\nabla w^+_{\lambda}\big|^2w^+_{\lambda}dx=0,$ which means that $w^+_{\lambda}=const.$ Since $w^+_{\lambda}$ is continuous on $\Sigma_{\lambda}\cup T_{\lambda}$ as well as zero on $T_{\lambda},$ we get $w^+_{\lambda}=0,$ and the claim is proved.
\par
In order to use the moving plane with $T_{\lambda}$ and obtain the monotonicity result of $u$ as desired, we define
$$\aligned
\Lambda_0:=\Big\{\lambda<0: \, u\leq u_{t} \,\, in\,\, \Sigma_{t}\setminus R_t(\Gamma) ,\,\, \forall t \in (-\infty,\lambda]\Big\}
\endaligned$$
and  $$\lambda_0:=\sup \Lambda_0.$$

\textbf{Step2:} We demonstrate that $\lambda_0=0$.
Arguing by contradiction, we suppose that  $\lambda_0<0$. By the continuity, we have
$$u(x)\leq u_{\lambda_0}(x), \quad \forall x\in \Sigma_{\lambda_0}\setminus R_{\lambda_0}(\Gamma).$$
If not, then there exists a point $\bar{x}\in \Sigma_{\lambda_0}\setminus R_{\lambda_0}(\Gamma)$ so that $u(\bar{x})>u_{\lambda_0}(\bar{x})$. Due to the compactness of singular set $R_{\lambda_0}(\Gamma)$, it follows that $\Sigma_{\lambda_0}\setminus R_{\lambda_0}(\Gamma)$ is open. Let
$$g(x,\lambda):=u(x)-u_{\lambda}(x)=u(x)-u(x_{\lambda}), \quad (x,\lambda)\in (\Sigma_{\lambda_0}\setminus R_{\lambda_0}(\Gamma))\times(-\infty,\lambda_0],$$
then $g$ is continuous. So there is a neighborhood $U(\bar{x})\subset \Sigma_{\lambda_0}\setminus R_{\lambda_0}(\Gamma)$  of the point $\bar{x}$  and a real number $\delta>0$   such that
$$g(x,t)>g(\bar{x},\lambda_0)=u(\bar{x})-u_{\lambda_0}(\bar{x})>0, \quad \forall (x,t)\in U(\bar{x})\times (\lambda_0-\delta,\lambda_0).$$
In particular, we get $u(\bar{x})>u_{t}(\bar{x})$ for any $\lambda_0-\delta<t<\lambda_0.$ In view of the definition of $\Lambda_0,$ we see there must be some $\tilde{\lambda}\leq \lambda_0-\delta$ satisfying that $T_{\tilde{\lambda}}$ is the limit position of the moving plane, i.e. we have
 $$\sup \Lambda_0 \leq \tilde{\lambda}<\lambda_0,$$ which makes a contradiction with the definition of the supremum. Hence the strong comparison principle (see Theorem \ref{Thm4.1} in the Appendix) implies that $u<u_{\lambda_0}$ in $\Sigma_{\lambda_0}\setminus R_{\lambda_0}(\Gamma).$ Indeed, it is impossible that $u=u_{\lambda_0}$ in $\Sigma_{\lambda_0}\setminus R_{\lambda_0}(\Gamma)$ provided $\lambda_0<0,$ since in this case $u$ is singular somewhere on $\Sigma_{\lambda_0}\setminus R_{\lambda_0}(\Gamma).$ Now, for the constant $\bar{\delta}>0$ which is going to be determined later, we claim that
$$u\leq u_{\lambda_0+\delta} \quad \text{in} \quad  \Sigma_{\lambda_0+\delta}\setminus R_{\lambda_0+\delta}(\Gamma)$$
for any $\delta\in (0,\bar{\delta}).$ As a result, $\lambda_0+\delta \in\Lambda_0,$ which contradicts with the definition of $\lambda_0.$ And whence we deduce that the hypothesis of $\lambda_0<0$ is not true, so it must be $\lambda_0=0.$ We are left with the proof of the claim above. It is easy to see that, for arbitrary $\varepsilon>0,$ there exist $\delta_1=\delta_1(\varepsilon, \lambda_0)>0$ and a suitable compact set $K$(only depending on $\varepsilon$ and $\lambda_0$) such that
$$\int_{\Sigma_{\lambda_0}\setminus K}u^{2^*}dx<\frac{\varepsilon}{2}, \quad \forall \lambda\in [\lambda_0,\lambda_0+\delta_1].$$
In addition, set
$$g(x,\lambda):=u(x)-u_{\lambda}(x)=u(x)-u(2\lambda-x_1,x')\in C(K\times [\lambda_0,\lambda_0+\delta_1]),$$
$g$ is continuous on the compact set $K\times [\lambda_0,\lambda_0+\delta_1]$. Hence there is a sufficiently small $\delta_2\in[0,\delta_1)$ such that
$$|g(x,\lambda_0)-g(x,\lambda_0+t)|\leq -\frac{g(x,\lambda_0)}{2}$$
for all $(x,t)\in [0,\delta_2)\times K,$
that is,
$$g(x,\lambda_0+t)\leq g(x,\lambda_0)-\frac{g(x,\lambda_0)}{2}=\frac{g(x,\lambda_0)}{2}=\frac{1}{2}(u(x)-u_{\lambda_0}(x))<0,$$
which implies that
$$u(x)<u_{\lambda_0+t}(x), \quad \forall x\in K, \,\forall t\in[0,\delta_2). $$
Because of the fact that $u\in L^{2^*}(\Sigma_{\lambda})$ for any $\lambda<0,$ we can take $\delta_2<\frac{|\lambda_0|}{4}$ with $\lambda_0+\frac{|\lambda_0|}{4}=\frac{3\lambda_0}{4}<0.$
By $u\in L^{2^*}(\Sigma_{\lambda_0+\frac{|\lambda_0|}{4}})$,  we get that there exists $\bar{\delta}\in(0,\delta_2)$ such that
$$\aligned
\int_{\Sigma_{\lambda}\setminus \Sigma_{\lambda_0}}u^{2^*}dx\leq \int_{\Sigma_{\lambda_0+\delta_2}\setminus \Sigma_{\lambda_0}}u^{2^*}dx<\frac{\varepsilon}{2},
\endaligned$$
for any $\lambda \in [\lambda_0,\lambda_0+\bar{\delta}]$.
Thus by combining $\int_{\Sigma_{\lambda_0}\setminus K}u^{2^*}dx<\frac{\varepsilon}{2}$, we have
$$\aligned
\int_{\Sigma_{\lambda}\setminus K}u^{2^*}dx=&\Big\{\int_{\Sigma_{\lambda_0}\setminus K}+\int_{\Sigma_{\lambda}\setminus \Sigma_{\lambda_0}}\Big\}u^{2^*}dx<\varepsilon
\endaligned$$
for all $\lambda\in[\lambda_0,\lambda_0+\bar{\delta}].$
Now we substitute the original test function with
$$\varphi:=w^+_{\lambda_0+\delta}\varphi^2_{\varepsilon}\varphi^2_{R} \chi_{\Sigma_{\lambda_0+\delta}}.$$
Repeat the argument in the step $1$,  by the $($\ref{eq3.17}$)$ we deduce that,  for any $0\leq\delta<\bar{\delta}$,
\begin{equation}\label{eq3.18}
\begin{aligned}
\int_{\Sigma_{\lambda_0+\delta}\setminus K} |\nabla w_{\lambda_0+\delta}^+|^2 w_{\lambda_0+\delta}^+ dx
\leq&  S^2C(\mu, N,f)
 ||u||^{2^*_{\mu}}_{L^{2^*}(\R^N\setminus\Gamma)}
 ||u||^{2^*_{\mu}-2}_{L^{2^*}(\Sigma_{\lambda_0+\delta}\setminus K)}\\
 &\times \Big(\int_{\Sigma_{\lambda_0+\delta}\setminus K}
        \big|\nabla(w^+_{\lambda_0+\delta})\big|^{2}w^+_{\lambda_0+\delta}dx\Big) .
 \end{aligned}
\end{equation}
According to  the construction above, it follows that $g(x,\lambda_0+\delta)<0$ for any $x\in K$. Therefore, the continuity of  $g$ yields that $w^+_{\lambda_0+\delta}=0=\nabla w^+_{\lambda_0+\delta}$ in a neighborhood of $K$. For fixed $\varepsilon<\Big[4S\cdot C(\mu,N)||u||^{2^*_{\mu}}_{L^{2^*}(\R^N\setminus \Gamma)}\Big]^{-\frac{2}{2^*_{\mu}-2}},$ then it holds
$$\aligned
&S\cdot C(\mu, N) ||u||^{2^*_{\mu}}_{L^{2^*}(\R^N\setminus \Gamma)}\Big(\int_{\Sigma_{\lambda_0+\delta}\setminus K}u^{2^*}dx\Big)^{\frac{2^*_{\mu}-2}{2}}
\leq 2S\cdot C(\mu, N) ||u||^{2^*_{\mu}}_{L^{2^*}(\R^N\setminus \Gamma)}\varepsilon^{\frac{2^*_{\mu}-2}{2}}<\frac{1}{2}, \forall 0\leq\delta<\bar{\delta}.
\endaligned$$
Plugging this into $($\ref{eq3.18}$)$, it leads to
$$\int_{\Sigma_{\lambda_0+\delta}\setminus K}|\nabla w^+_{\lambda_0+\delta}|^2w^+_{\lambda_0+\delta}dx=0,\quad \forall 0\leq\delta<\bar{\delta}.$$
Since $\nabla w^+_{\lambda_0+\delta}$ is zero in a neighborhood of $K,$ we have
$$\int_{\Sigma_{\lambda_0+\delta}}|\nabla w^+_{\lambda_0+\delta}|^2w^+_{\lambda_0+\delta}dx=0, \forall \delta\in[0,\bar{\delta}).$$
Note that the restriction of $w^+_{\lambda_0}$ on $T_{\lambda_0+\delta}$ is zero, we conclude that $w^+_{\lambda_0+\delta}\equiv 0$ in $\Sigma_{\lambda_0+\delta}\setminus R_{\lambda_0+\delta}(\Gamma)$. As a consequence, $u\leq u_{\lambda+0+\delta}$ in $\Sigma_{\lambda_0+\delta}\setminus R_{\lambda_0+\delta}(\Gamma)$ for all $\delta\in[0,\bar{\delta})$. Thus the step $2$ is completed.
\par
\textbf{Step3.}
Similar to above steps, we can perform the moving plane procedure in the opposite direction and then obtain the symmetry of the positive singular solution.
Moreover, the moving plane technique implies that the positive singular solution is increasing in the $x_1$- direction in $\{x\in \R^N: x_1 <0\}$, i.e.,
$$u_{x_1}>0 ~~\text{in} ~~ \{x_1 <0\}.$$
If the solution $u$ has only a nonremovable singularity at the origin $\{0\}$, then the solution $u$ is radial and radially decreasing about $\{0\}$ by the moving plane procedure. This accomplishes the proof of Theorem \ref{Thm1.1}.
\raisebox{-0.5mm}{\rule{2.5mm}{3mm}}\vspace{6pt}
\par

\section{A model problem}  \label{sec:proof2}
Let us first introduce the Riesz potential operator
$$I_{\alpha}f(x)=\frac{1}{\gamma(\alpha)}\int_{\R^N}\frac{f(y)}{|x-y|^{N-\alpha}}dy,$$
where $0<\alpha<N$, $\gamma({\alpha})= {2^{\alpha}\pi^{{N}/{2}}\Gamma(\frac{\alpha}{2}) }/{\Gamma(\frac{N-\alpha}{2})}$ and $\Gamma(z)=\int_0^{+\infty}t^{z-1}e^{-t}dt$.

We state some useful results as follows:
\begin{Thm}\label{Thm3.1}
(\cite[Theorem 1.33]{Maly-William1997AMSPR}) Let $\alpha>0$, $1<p<\infty$, and $\alpha p<N$. Then, there exists a positive constant $C=C(N,p,\alpha)$ such that
$$||I_{\alpha} f||_{q} \leq C||f||_p, \qquad q=\frac{Np}{N-\alpha p},$$
whenever $f\in L^p(\R^N)$.
\end{Thm}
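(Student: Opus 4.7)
The plan is to prove the Hardy--Littlewood--Sobolev bound for the Riesz potential via the classical Hedberg pointwise inequality, which reduces everything to the $L^p$-boundedness of the Hardy--Littlewood maximal operator. Throughout, I will drop the harmless constant $1/\gamma(\alpha)$ and work with $\tilde I_\alpha f(x)=\int_{\R^N}|x-y|^{\alpha-N}f(y)\,dy$, assuming $f\geq 0$ (the general case follows by splitting $f$ into positive and negative parts). Fix a radius $r>0$ to be chosen later and split
\[
\tilde I_\alpha f(x)=\int_{|x-y|\le r}\frac{f(y)}{|x-y|^{N-\alpha}}\,dy+\int_{|x-y|>r}\frac{f(y)}{|x-y|^{N-\alpha}}\,dy=:A(x,r)+B(x,r).
\]

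For the near piece $A(x,r)$, I would use a dyadic decomposition: writing the annuli $E_k=\{2^{-k-1}r<|x-y|\le 2^{-k}r\}$ for $k\ge 0$, each annulus contributes at most $(2^{-k-1}r)^{\alpha-N}$ times the mass of $f$ on the ball $B(x,2^{-k}r)$. Summing the geometric series in $k$ (which converges because $\alpha>0$) yields the bound $A(x,r)\le C\,r^\alpha Mf(x)$, where $Mf$ is the Hardy--Littlewood maximal function. For the far piece $B(x,r)$, I would apply H\"older's inequality with exponents $p$ and $p'=p/(p-1)$:
\[
B(x,r)\le \|f\|_p\Bigl(\int_{|x-y|>r}|x-y|^{(\alpha-N)p'}\,dy\Bigr)^{1/p'}.
\]
The hypothesis $\alpha p<N$ is exactly $(N-\alpha)p'>N$, which makes the tail integral converge and produces $B(x,r)\le C\,r^{\alpha-N/p}\|f\|_p$.

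Combining the two estimates gives $\tilde I_\alpha f(x)\le C\bigl(r^\alpha Mf(x)+r^{\alpha-N/p}\|f\|_p\bigr)$. I now optimize over $r>0$: balancing the two terms via $r=\bigl(\|f\|_p/Mf(x)\bigr)^{p/N}$ (valid wherever $Mf(x)>0$; otherwise $f=0$ a.e.\ and the claim is trivial) produces Hedberg's pointwise inequality
\[
\tilde I_\alpha f(x)\le C\,\bigl(Mf(x)\bigr)^{p/q}\,\|f\|_p^{1-p/q},
\]
using the identity $1-\alpha p/N=p/q$ which follows from $q=Np/(N-\alpha p)$.

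Finally, I would raise both sides to the $q$-th power, integrate over $\R^N$, and invoke the $L^p$-boundedness of the Hardy--Littlewood maximal operator (valid precisely because $1<p<\infty$):
\[
\|\tilde I_\alpha f\|_q^q\le C\,\|f\|_p^{q-p}\int_{\R^N}(Mf)^p\,dx\le C\,\|f\|_p^{q-p}\,\|f\|_p^{p}=C\,\|f\|_p^{q}.
\]
Taking $q$-th roots and reinstating $1/\gamma(\alpha)$ yields the claimed inequality. The main obstacle is really a conceptual one: the proof depends on the strong $L^p\!\to\!L^p$ bound for $M$, which fails at $p=1$; this is exactly why the statement excludes $p=1$ (where only the weak-type estimate, and hence only a weak-type bound for $I_\alpha$, is available). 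The conditions $\alpha>0$ and $\alpha p<N$ are used in the convergence of the near-side dyadic sum and the far-side H\"older integral respectively, so every hypothesis is consumed.
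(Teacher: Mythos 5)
Your proof is correct: the dyadic near-field estimate (convergent since $\alpha>0$), the far-field H\"older estimate (convergent precisely because $\alpha p<N$, i.e.\ $(N-\alpha)p'>N$), the optimization in $r$ giving Hedberg's pointwise bound $\tilde I_\alpha f(x)\le C\,(Mf(x))^{p/q}\|f\|_p^{1-p/q}$, and the final use of the strong $(p,p)$ maximal inequality (which is where $p>1$ enters) all check out, and the reduction to $f\ge 0$ together with the degenerate cases $Mf(x)\in\{0,\infty\}$ is handled appropriately. Note that the paper itself gives no proof of this statement — it is quoted directly from \cite[Theorem 1.33]{Maly-William1997AMSPR} — so there is nothing in-paper to compare against; your Hedberg-type maximal-function argument is the standard route to this Sobolev inequality for the Riesz potential and fully establishes the claim.
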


\begin{Thm}(\cite{Stein1993Book})
Let $\mathcal{F}(u)$ denotes the Fourier transform of the function $u$ by
$$\mathcal{F}(u)(x)=\int_{\R^N}u(\xi)e^{-2\pi \sqrt{-1}x\cdot \xi}d\xi.$$
Let $0<\alpha<N$, we have
\begin{itemize}\label{Thm3.2}
\item[(\romannumeral1)]$\mathcal{F}^{-1}\big(\frac{1}{|x|^{N-\alpha}}\big)=\mathcal{F}\big(\frac{1}{|x|^{N-\alpha}}\big)=\gamma(\alpha)\frac{1}{(2\pi |x|)^{\alpha}}$.
\item[(\romannumeral2)] In addition, it holds that
$$\mathcal{F}(I_{\alpha}f)=\frac{1}{(2\pi |x|)^\alpha}\mathcal{F}(f)(x)$$
\end{itemize}
where $f$ belongs to the Schwartz space $\mathcal{S}(\R^n)$.
\end{Thm}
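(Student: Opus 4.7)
The plan is to establish the Fourier transform identity in (i) via the subordination principle, which represents the power-law kernel as a continuous superposition of Gaussians, and then to derive (ii) as a direct consequence using the convolution theorem.

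First I would invoke the elementary gamma-function identity
$$|x|^{-(N-\alpha)} \;=\; \frac{1}{\Gamma\!\left(\tfrac{N-\alpha}{2}\right)} \int_0^{\infty} t^{\frac{N-\alpha}{2}-1} \, e^{-t|x|^{2}}\, dt, \qquad 0<\alpha<N,$$
obtained from $b^{-a}=\Gamma(a)^{-1}\!\int_0^\infty t^{a-1}e^{-bt}dt$ with $b=|x|^2$ and $a=(N-\alpha)/2$. Next, using the classical Gaussian Fourier transform with the paper's convention,
$$\mathcal{F}\bigl(e^{-t|\cdot|^{2}}\bigr)(\xi) \;=\; \bigl(\pi/t\bigr)^{N/2}\, e^{-\pi^{2}|\xi|^{2}/t},$$
I would interchange the order of integration (this is the subtle step, see below) to obtain
$$\mathcal{F}\bigl(|x|^{-(N-\alpha)}\bigr)(\xi) \;=\; \frac{\pi^{N/2}}{\Gamma\!\left(\tfrac{N-\alpha}{2}\right)} \int_0^{\infty} t^{-\alpha/2-1} \, e^{-\pi^{2}|\xi|^{2}/t}\, dt.$$
The substitution $s=\pi^{2}|\xi|^{2}/t$ converts the remaining integral into $(\pi|\xi|)^{-\alpha}\Gamma(\alpha/2)$, and comparing with the definition $\gamma(\alpha)=2^{\alpha}\pi^{N/2}\Gamma(\alpha/2)/\Gamma((N-\alpha)/2)$ yields $\mathcal{F}(|x|^{-(N-\alpha)}) = \gamma(\alpha)(2\pi|\xi|)^{-\alpha}$. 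Since the kernel is even and real, $\mathcal{F}^{-1}$ and $\mathcal{F}$ coincide on it, completing (i).

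For (ii), writing $I_\alpha f = \gamma(\alpha)^{-1} \bigl(|\cdot|^{-(N-\alpha)} * f\bigr)$ directly from the definition and applying the convolution theorem $\mathcal{F}(g*f)=\mathcal{F}(g)\mathcal{F}(f)$ together with (i) gives
$$\mathcal{F}(I_{\alpha}f)(\xi) \;=\; \gamma(\alpha)^{-1} \cdot \gamma(\alpha)(2\pi|\xi|)^{-\alpha} \cdot \mathcal{F}(f)(\xi) \;=\; (2\pi|\xi|)^{-\alpha}\mathcal{F}(f)(\xi).$$
For $f\in\mathcal{S}(\mathbb{R}^N)$ the convolution is well-defined since $|x|^{-(N-\alpha)}$ is locally integrable and decays at infinity, and $I_\alpha f$ at least inherits tempered-distribution regularity (as is visible also from the mapping property of Theorem \ref{Thm3.1}), so the Fourier identity is meaningful.

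The main obstacle is the Fubini step in (i): the kernel $|x|^{-(N-\alpha)}$ is not in $L^{1}\cup L^{2}(\mathbb{R}^{N})$, so $\mathcal{F}(|x|^{-(N-\alpha)})$ must be understood in the sense of tempered distributions rather than as a pointwise integral. I would handle this by testing against an arbitrary $\varphi\in\mathcal{S}(\mathbb{R}^N)$: using the duality $\langle\mathcal{F}T,\varphi\rangle=\langle T,\mathcal{F}\varphi\rangle$ moves the Fourier operator onto $\varphi$, whose rapid decay then legitimately justifies interchanging the $t$-integral with the $x$-integral by the standard Fubini theorem. An alternative, and perhaps cleaner, route is to regularize the kernel as $|x|^{-(N-\alpha)}e^{-\varepsilon|x|^{2}}$, which lies in $L^{1}$, carry out the computation rigorously for $\varepsilon>0$, and then pass to the limit $\varepsilon\to 0^{+}$ in the distributional sense using dominated convergence; both computations land on the same constant $\gamma(\alpha)(2\pi|\xi|)^{-\alpha}$ because the change of variable $s=\pi^{2}|\xi|^{2}/t$ is unaffected by the regularization.
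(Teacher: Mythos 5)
Your proposal is correct, and in fact the paper offers no proof of this statement at all: Theorem \ref{Thm3.2} is quoted directly from Stein's book as a classical fact, so there is no internal argument to compare against. Your subordination derivation is the standard textbook route and the constants check out: the Gamma-identity $|x|^{-(N-\alpha)}=\Gamma(\tfrac{N-\alpha}{2})^{-1}\int_0^\infty t^{\frac{N-\alpha}{2}-1}e^{-t|x|^2}dt$, the Gaussian transform $(\pi/t)^{N/2}e^{-\pi^2|\xi|^2/t}$ under the convention $\mathcal{F}u(\xi)=\int u(x)e^{-2\pi i x\cdot\xi}dx$, and the substitution $s=\pi^2|\xi|^2/t$ indeed produce $\pi^{N/2-\alpha}\Gamma(\tfrac{\alpha}{2})\Gamma(\tfrac{N-\alpha}{2})^{-1}|\xi|^{-\alpha}=\gamma(\alpha)(2\pi|\xi|)^{-\alpha}$, matching the paper's normalization of $\gamma(\alpha)$. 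Your handling of the only delicate point is also adequate: since $|x|^{-(N-\alpha)}\notin L^1\cup L^2$, the identity must be read in $\mathcal{S}'$, and after pairing with $\varphi\in\mathcal{S}$ the double integral has a nonnegative integrand with the correct behaviour at $t\to 0^+$ (using $\alpha<N$) and $t\to\infty$ (using $\alpha>0$), so Tonelli justifies the interchange; the regularization $|x|^{-(N-\alpha)}e^{-\varepsilon|x|^2}$ is an equally valid alternative. Part (ii) then follows from $I_\alpha f=\gamma(\alpha)^{-1}\bigl(|\cdot|^{-(N-\alpha)}*f\bigr)$ and the convolution theorem for a tempered distribution convolved with a Schwartz function, the product $(2\pi|\xi|)^{-\alpha}\mathcal{F}f$ being locally integrable because $\alpha<N$. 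In short: a complete and correct self-contained proof of a result the paper only cites.
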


\par
$\mathbf{Proof~of~Theorem~\ref{Thm1.2}.}$~~
By using the Riesz potential operator and Theorem \ref{Thm3.2}, \eqref{eq1.8}  is equivalent to the following equation
\begin{equation}\label{eq}
u^{-q}(-\Delta u)=\gamma(N-\mu) I_{N-\mu}(u^p).
\end{equation}
Set $u(x)=\frac{A}{|x|^s}$,  we next determine $A$ and $s$. Direct computations yield
$$\aligned
u^{-q}(-\Delta u)=A^{1-q}|x|^{sq}(-\Delta(|x|^{-s})) =A^{1-q}s(N-2-s)|x|^{s(q-1)-2},
\endaligned$$
and
$$\aligned
\gamma(N-\mu)I_{N-\mu}(u^p)&=\gamma(N -\mu)I_{N-\mu}\big({(\frac{A}{|x|^s})}^p\big)=A^p\gamma(N-\mu)I_{N-\mu}(|x|^{-sp}).
\endaligned$$
Combining the above two identities and \eqref{eq}, we have
\begin{equation*}
s(N-2-s)|x|^{s(q-1)-2}=A^{p+q-1}\gamma(N-\mu)I_{N-\mu}(|x|^{-sp}),
\end{equation*}
It follows from the Fourier transform and the Theorem \ref{Thm3.2} that
\begin{equation*}
s(N-2-s)\mathcal{F}(|x|^{s(q-1)-2})=A^{p+q-1}\gamma(N-\mu)\mathcal{F}(I_{N-\mu}(|x|^{-sp}))
=A^{p+q-1}\gamma(N-\mu)\frac{\mathcal{F}(|x|^{-sp})}{(2\pi |x|)^{N-\mu}},
\end{equation*}
which means
$$\frac{s(N-2-s)\gamma(N-2+s(q-1))}{(2\pi)^{N-2+s(q-1)}|x|^{N-2+s(q-1)}}
=A^{p+q-1}\frac{\gamma(N-\mu)\gamma(N-sp)}{(2\pi)^{2N-\mu-sp}|x|^{2N-\mu-sp}},$$
where the parameters need to satisfy
$$\left\{
\begin{array}{l}
 0<\mu<N, \\0<N-2+s(q-1)<N, \\0<sp<N
\end{array}
\right.$$
and
$$\left\{
\begin{array}{l}
 N-2+s(q-1)=2N-\mu-sp,\\
 s(N-2-s)\gamma(N-2+s(q-1))=A^{p+q-1}\gamma(N-\mu)\gamma(N-sp).
\end{array}
\right.$$
Thus, we have that
$$s=\frac{N-\mu+2}{p+q-1}>0,$$
and
$$\aligned
A^{p+q-1}=\frac{s(N-2-s)\gamma(N-2+s(q-1))}{\gamma(N-\mu)\gamma(N-sp)}  >0.  \endaligned$$
\raisebox{-0.5mm}{\rule{2.5mm}{3mm}}\vspace{6pt}
\par

\section{Appendix}  \label{sec:app}
\begin{Thm}\label{Thm4.1}
(Strong Comparison Principle) Let $\Sigma^{\prime}_{\lambda}:=\Sigma_{\lambda}\setminus R_{\lambda}(\Gamma)$ be a $N$-dimensional connected open set, where $\Sigma_{\lambda}$ is defined as before. Suppose that $u$ and $u_{\lambda}$ are continuous on $\Sigma^{\prime}_{\lambda}$ with
\begin{equation}\label{eq4.1}
\begin{aligned}
&\int_{\Sigma_{\lambda}}\nabla u_{\lambda}\nabla\varphi dx- \int_{\Sigma_{\lambda}}\Big(\int_{\R^N\setminus R_{\lambda}(\Gamma)}
      \frac{F(u_{\lambda})(y)}{|x-y|^{\mu}}dy\Big)f(u_{\lambda}(x))\varphi(x)dx\geq0, \\
&\int_{\Sigma_{\lambda}}\nabla u\nabla\varphi dx- \int_{\Sigma_{\lambda}}\Big(\int_{\R^N\setminus \Gamma}
      \frac{F(u(y))}{|x-y|^{\mu}}dy\Big)f(u(x))\varphi(x)dx\leq0\\
\end{aligned}
\end{equation}
for any nonnegative $\varphi\in C^1_c(\R^N\setminus R_{\lambda}(\Gamma)).$ Suppose that $u\leq u_{\lambda}$ in $\Sigma^{\prime}_{\lambda},$ then either $u\equiv u_{\lambda}$ in $\Sigma^{\prime}_{\lambda}$ or $u<u_{\lambda}$ in $\Sigma^{\prime}_{\lambda}.$
\end{Thm}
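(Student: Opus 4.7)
My strategy is to reduce the problem to a linear differential inequality for $W := u_\lambda - u$ and invoke the classical strong maximum principle for an operator of Schr\"odinger type $-\Delta + c(x)$ with $c \in L^\infty_{\mathrm{loc}}$. Since $u \leq u_\lambda$ in $\Sigma_\lambda'$ by hypothesis, $W \geq 0$ there. Subtracting the two inequalities in \eqref{eq4.1}, for every nonnegative $\varphi \in C^1_c(\Sigma_\lambda')$ I obtain
$$\int_{\Sigma_\lambda'} \nabla W \cdot \nabla \varphi\, dx \geq \int_{\Sigma_\lambda'}\bigl[v_\lambda(x)f(u_\lambda(x)) - v(x)f(u(x))\bigr]\varphi(x)\, dx,$$
and I would split the right-hand side as
$$v_\lambda\bigl(f(u_\lambda) - f(u)\bigr) + (v_\lambda - v)\,f(u)$$
to extract a linear principal term in $W$ and isolate a manifestly nonnegative remainder.

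For the second summand I need $(v_\lambda - v)f(u) \geq 0$ in $\Sigma_\lambda'$. Repeating the reflection identity already used in Lemma \ref{Lem3.1} (and using the null $2$-capacity of $\Gamma$ and $R_\lambda(\Gamma)$ to harmlessly swap integration domains), I would rewrite, for $x \in \Sigma_\lambda'$,
$$v_\lambda(x) - v(x) = \int_{\Sigma_\lambda'}\bigl(F(u_\lambda(y)) - F(u(y))\bigr)\left(\frac{1}{|x-y|^\mu} - \frac{1}{|x_\lambda - y|^\mu}\right)dy.$$
For $x, y \in \Sigma_\lambda$ the kernel difference is nonnegative; moreover $F' = f \geq 0$ combined with $u \leq u_\lambda$ gives $F(u_\lambda) - F(u) \geq 0$, and together with $f(u) \geq 0$ this yields the required sign. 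For the first summand the mean value theorem produces $f(u_\lambda) - f(u) = f'(\xi)\,W$ with $\xi(x) \in [u(x), u_\lambda(x)]$, so after discarding the nonnegative residual the differential inequality takes the form
$$\int_{\Sigma_\lambda'} \nabla W \cdot \nabla \varphi\, dx + \int_{\Sigma_\lambda'} c(x)\, W(x)\,\varphi(x)\, dx \geq 0, \qquad c(x) := -v_\lambda(x) f'(\xi(x)),$$
which is the weak formulation of $-\Delta W + c\, W \geq 0$ in $\Sigma_\lambda'$.

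To close the argument I must verify that $c \in L^\infty_{\mathrm{loc}}(\Sigma_\lambda')$. Continuity of $u$ and $u_\lambda$ on $\Sigma_\lambda'$ makes $\xi$ bounded on any compact $K \subset \Sigma_\lambda'$, so $f'(\xi)$ is bounded there by $(f_1)$. For the potential, the growth condition gives $F(u_\lambda) \leq C\, u_\lambda^{2_\mu^*} \in L^{2N/(2N-\mu)}$, and the Hardy--Littlewood--Sobolev estimate together with the positive distance from $R_\lambda(\Gamma)$ to $K$ shows that $v_\lambda$ is continuous, hence bounded, on $K$. Since $\Sigma_\lambda'$ is connected by hypothesis, the classical strong maximum principle for $-\Delta + c$ with $c \in L^\infty_{\mathrm{loc}}$ on a connected domain then delivers the dichotomy $W \equiv 0$ or $W > 0$, which is the stated conclusion. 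The main technical obstacle I anticipate is the careful bookkeeping in the $v_\lambda - v$ identity: the integration domains differ ($\R^N \setminus \Gamma$ versus $\R^N \setminus R_\lambda(\Gamma)$) and one must use the change of variables $y \mapsto y_\lambda$ to reassemble a single integral over $\Sigma_\lambda'$ with a signed kernel; this is precisely where the null capacity hypothesis on the singular set is essential.
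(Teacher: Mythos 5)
Your proposal is correct and follows essentially the same route as the paper's proof: both rely on the reflection identity $v_\lambda(x)-v(x)=\int_{\Sigma_\lambda'}(F(u_\lambda(y))-F(u(y)))\big(|x-y|^{-\mu}-|x_\lambda-y|^{-\mu}\big)\,dy$ together with the sign of the kernel difference and the monotonicity of $F$ to control the nonlocal term, and then reduce to a linear differential inequality for $W=u_\lambda-u$. The only cosmetic difference is the final step: you keep the linearized term $v_\lambda f'(\xi)W$ and invoke the strong maximum principle for a Schr\"odinger operator $-\Delta+c$ with $c\in L^\infty_{\mathrm{loc}}$, whereas the paper observes that this term is also nonnegative (since $v_\lambda\ge 0$ and $f'\ge 0$), drops it to conclude that $W$ is weakly superharmonic near a zero, applies the weak Harnack inequality on a small ball, and finishes with the open--closed argument on the connected set $\Sigma_\lambda'$; since the strong maximum principle you cite is itself proved via weak Harnack plus connectedness, the two arguments are interchangeable.
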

\begin{proof}
If $u<u_{\lambda}$ in $\Sigma^{\prime}_{\lambda}$, the theorem holds. Next, suppose there exists  a point $x_0\in\Sigma^{\prime}_{\lambda}$ such that $u(x_0)=u_{\lambda}(x_0).$ Since $u\leq u_{\lambda}$ in $\Sigma^{\prime}_{\lambda},$ $x_0$ is a minimum point of $u_{\lambda}-u$ in $\Sigma^{\prime}_{\lambda}.$ Set $$\Omega^{\prime}=\{x\in \Sigma^{\prime}_{\lambda}| u(x)=u_{\lambda}(x)\},$$
then,  $\Omega^{\prime}$ is relatively closed to $\Sigma^{\prime}_{\lambda}$ due to the continuity of $u$ and $u_{\lambda}.$ It suffices to show that $\Omega^{\prime}$ is open. Indeed, choosing a ball centered at $x_0$ with radius $\rho$ small enough such that $B_{4\rho}(x_0)\subset \Sigma^{\prime}_{\lambda}.$ And let $\omega$ be the restriction of $u_{\lambda}-u$ on $B_{4\rho}(x_0)$ and vanish outside $B_{4\rho}(x_0).$ On the other hand, for any nonnegative $\phi\in C^1_{c}(B_{4\rho}(x_0)),$ it follows from $($\ref{eq4.1}$)$ that
\begin{equation*}
\begin{aligned}
\int_{B_{4\rho}(x_0)}\nabla u_{\lambda}\nabla\varphi dx - \int_{B_{4\rho}(x_0)}\Big(\int_{\R^N\setminus R_{\lambda}(\Gamma)}
        \frac{F(u_{\lambda}(y))}{|x-y|^{\mu}}dy\Big) f(u_{\lambda}(x))\varphi(x)dx \geq 0
\end{aligned}
\end{equation*}
and
\begin{equation*}
\begin{aligned}
\int_{B_{4\rho}(x_0)}\nabla u\nabla\varphi dx - \int_{B_{4\rho}(x_0)}\Big(\int_{\R^N\setminus R_{\lambda}(\Gamma)}
        \frac{F(u(y))}{|x-y|^{\mu}}dy\Big) f(u(x))\varphi(x)dx \leq 0.
\end{aligned}
\end{equation*}
We deduce from the monotonicity of $f$  and the two above inequalities  that
\begin{equation*}
\begin{aligned}
\int_{B_{4\rho}(x_0)} \nabla \omega \nabla\varphi dx
& \geq \int_{B_{4\rho}(x_0)} \Big[\int_{\R^N\setminus R_{\lambda}(\Gamma)}
        \frac{F(u_{\lambda}(y))}{|x-y|^{\mu}}dyf(u_{\lambda}(x))-\int_{\R^N\setminus \Gamma}
        \frac{F(u(y))}{|x-y|^{\mu}}dyf(u(x))\Big]\varphi(x)dx\\
&\quad=\int_{B_{4\rho}(x_0)} \bigg(\int_{\Sigma_{\lambda}\setminus R_{\lambda}(\Gamma)} \big(F(u(y))-F(u_{\lambda}(y))\big)
        \Big(\frac{1}{|x_{\lambda}-y|^{\mu}}-\frac{1}{|x-y|^{\mu}}\Big)dy\bigg)\varphi(x)dx\\
&\quad+\int_{B_{4\rho}(x_0)}\Big(\int_{\R^N\setminus \Gamma}\frac{F(u(y))}{|x-y|^{\mu}}\Big)(f(u_{\lambda}(x))-f(u(x)))\varphi(x)dx\\
&\geq 0,
\end{aligned}
\end{equation*}
which implies that $\omega$ is a nonnegative weak supersolution of
\begin{equation}
-\Delta \omega=0  \quad in \quad B_{4\rho}(x_0).
\end{equation}
By means of the weak Harnack inequality \cite[Lemma 2.113]{Maly-William1997AMSPR}, we have
$$\aligned
0\leq \crossint_{B_{\rho}(x_0)} \omega(x)\,dx \leq C(N)\inf\limits_{B_{\rho}(x_0)}\omega=\omega(x_0)=0,
\endaligned$$
which implies $\omega\equiv 0$ in $B_{\rho}(x_0)\subset\Sigma^{\prime}_{\lambda}.$ So $B_{\rho}(x_0)\subset \Omega^{\prime},$ i.e. $\Omega^{\prime}$ is open relatively to $\Sigma^{\prime}_{\lambda}.$ This completes the proof.
\end{proof}

\section*{Acknowledgments}
This work was supported by the National Natural Science Foundation of China (12261107, 12101546), Yunnan Fundamental Research Projects (202301AU070144, 202301AU070159), Scientific Research Fund of Yunnan Educational Commission (2023J0199, 2023Y0515), and Yunnan Key Laboratory of Modern Analytical Mathematics and Applications (202302AN360007). A.J. is partially supported by PRIN 2022 "\emph{Pattern formation in nonlinear phenomena}" and is a member of the INDAM Research Group GNAMPA.

\end{document}